\documentclass[12pt]{article}
\usepackage{amsmath,amsfonts,amssymb,amsthm,amscd}
\title{  \LARGE  Remarks on convergence of Morley sequences \\ 
}


\author{Karim Khanaki\thanks{Partially supported by IPM grant 1400030118}\\Arak University of Technology \\ \\ {\em Dedicated to the memory of my first teacher Fatemeh Mardani}}

\newtheorem{Theorem}{Theorem}[section]
\newtheorem{Proposition}[Theorem]{Proposition}
\newtheorem{Definition}[Theorem]{Definition}
\newtheorem{Remark}[Theorem]{Remark}
\newtheorem{Lemma}[Theorem]{Lemma}
\newtheorem{Corollary}[Theorem]{Corollary}
\newtheorem{Fact}[Theorem]{Fact}

\newtheorem{Convention}[Theorem]{Convention}


\def\dotminus{\mathbin{\ooalign{\hss\raise1ex\hbox{.}\hss\cr
  \mathsurround=0pt$-$}}}

\begin{document}
\maketitle

\begin{abstract}   We refine results of  Gannon \cite[Theorem~4.7]{Gannon sequential} and Simon \cite[Lemma~2.8]{S-invariant} on  convergence of Morley sequences. We then introduce the notion of {\em eventual $NIP$}, as a property of a model, and prove a variant of \cite[Corollary~2.2]{KP}. Finally, we give  new characterizations of generically stable types (for countable theories) and reinforce the main result of Pillay \cite{Pillay} on  the model-theoretic meaning of   Grothendieck's double limit theorem.
\end{abstract}

\section{Introduction} \label{1} 
Pillay and Tanović \cite{Pillay-Tanovic} introduced the notion of {\em generically stable type}, for {\em arbitrary} theories, as an abstraction of the crucial properties of definable types in stable theories. All invariant types in $NIP$ theories and all generically stable types in arbitrary theories  share an important phenomenon: {\em convergence of Morley sequences.} 
Using this phenomenon/property, although it is not explicitly mentioned, Simon \cite{S-invariant} proved the following interesting result:

\medskip\noindent {\bf Simon's Lemma\footnote{\cite[Lemma~2.8]{S-invariant}. In this article, when we refer to Simon's Lemma,	we mean this result.}:} Let $T$ be a countable $NIP$ theory and $M$ a countable model of $T$. Suppose that $p(x)\in S(\cal U)$ is  finitely satisfiable in $M$. Then  there is a sequence $(c_i)$ in $M$ such that $\lim tp(c_i/{\cal U})=p$.

The present paper 
aims to focus on convergence of Morley sequences.
The core of our observations/proofs here is that the convergence of tuples/types depends on a certain type of formulas, namely {\em symmetric formulas}. We show that a sequence of types converges if and only if there are some  symmetric  formulas that are not true in the sequence.

\medskip\noindent
On the other hand, the origin of Simon's lemma is related to the following crucial theorem in functional analysis due to Bourgain, Fremlin, and Talagrand \cite[Thm.~3F]{BFT}:

\medskip\noindent {\bf BFT Theorem:} Let $X$ be a Polish space. Then the space $B_1(X)$ of all Baire~1 functions on $X$ is an angelic space with the topology of  pointwise convergence.

\medskip\noindent
In \cite[Appendix~A]{K-GC}, it is shown that {\bf complete} types (not just   $\phi$-types) can be coded by  suitable functions, and a refinement of Simon's lemma is given using the BFT theorem. Recall that every point in the closure of a relatively
compact set of an angelic space is the limit of a {\bf sequence} of its points, and  relatively
compact sets of  $B_1(X)$ possess a property similar to $NIP$.
(Cf. \cite{KP}.)
In this paper we aim  to give a model theoretic version of the  Bourgain-Fremlin-Talagrand result in the terms of convergent Morley sequences.\footnote{Although we only use one analytical/combinatorial result (Fact~\ref{Rosenthal lemma}), we will explain that it is not even needed and that all arguments in this paper are model-theoretic. Cf. Remark~\ref{A=any}(iv) below.}  More precisely, we prove that: 

\medskip\noindent {\bf Theorem A:}
	Let $T$ be a countable theory and $M$ a countable model of  $T$.\footnote{We can consider countable fragments of (uncountable) theories, however, to make the proofs more readable, we assume that the theory is countable.} 
	\newline
	(1) Suppose that  $p(x)\in S(\cal U)$  is finitely satisfiable in $M$ and there is a Morley sequence $(d_i)$ of $p$ over $M$ such that $(tp(d_i/{\cal U}):i<\omega)$ converges. Then there is a sequence $(c_i)\in M$ such that  $\lim tp(c_i/{\cal U})=p$.
	\newline	
	(2) Furthermore, the following are equivalent:
	\newline
	(i) $M$ is eventually $NIP$ (as in Definition~\ref{eventual NIP}). 
	\newline
	(ii) For any $p(x)\in S(\cal U)$ which  is  finitely satisfiable in $M$, there is a sequence $(c_i)\in M$ such that the sequence $(tp(c_i/{\cal U}):i<\omega)$ $DBSC$-converges to  $p$ (as in Definition~\ref{DBSC converge}).
	\newline
	(iii) For any $p(x)\in S(\cal U)$ which  is finitely satisfiable in $M$, there is a Morley sequence $(d_i)$ of $p$ over $M$ such that $(tp(d_i/{\cal U}):i<\omega)$ converges.
 
\medskip
Recall that Morley sequences in $NIP$ theories are convergent (cf. Definition~\ref{covergence}).\footnote{This is a consequence of indiscernibility of Morley sequences and countability of theory. If not, one can find a Morley sequence $(a_i)$ and a formula $\phi(x)$ such that $\models\phi(a_i)$ iff $i$ is even, a contradiction.} Therefore, as the theory $T$ in   Theorem A is arbitrary,  the equivalences (i)--(iii) of Theorem A(2)  refine  Simon's lemma above. On the other hand,  a result of Gannon \cite[Thm.~4.7]{Gannon sequential}  asserts that:  

\medskip\noindent {\bf Gannon's theorem:} Let $T$ be a countable theory and $M$ a (not necessarily countable) model of $T$. Suppose that $p(x)\in S(\cal U)$ is  generically stable over $M$. Then  there is a sequence $(c_i)$ in $M$ such that $\lim tp(c_i/{\cal U})=p$.

\medskip\noindent
 This follows from Theorem~A(1) and the fact that every generically stable type over $M$ is  generically stable over a {\em countable} elementary substructure of $M$. We also give a refinement of Gannon's theorem. In fact, we give a new characterization of generically stable types for countable theories:

\medskip\noindent {\bf Theorem B:}
 	Let $T$ be a countable theory, $M$ a model of $T$, and $p(x)\in S({\cal U})$ a global $M$-invariant type. The following are equivalent:
 	\newline
 	(i) $p$ is generically stable over $M$.
 	\newline
 	(ii) $p$ is definable over $M$, AND there is a sequence $(c_i)$ in $M$ such that $(tp(c_i/{\cal U}):i<\omega)$ $DBSC$-converges to $p$  (as in Definition~\ref{DBSC converge}).
 	\newline
 	Suppose moreover that $T$ has $NSOP$, then (iii) below
 	is also equivalent to (i), (ii) above:
 	\newline
 	(iii)  there is a sequence $(c_i)$ in $M$ such that $(tp(c_i/{\cal U}):i<\omega)$ $DBSC$-converges to $p$.
 
 \medskip
 Notice that, as $DBSC$-convergence is strictly stronger than usual convergence, Theorem B is a clear refinement of Gannon's theorem. Moreover, this result can be lead to an answer to \cite[Question~4.15]{Gannon sequential}.
 
 \medskip 
 Theorems~A and B  allow us to reinforce the main result of \cite{Pillay} on {\em generic} stability in a model. That is,  
 
 \medskip\noindent {\bf Theorem C:}
 Let $T$ be a (countable or uncountable) theory, and $M$ be a model of $T$. The following are equivalent:
 \newline
 (i) Any  type $p\in S_x(M)$ has an extension to a global type $p'\in S_x({\cal U})$ which  is generically stable over $M$.
  \newline 
  (ii) $M$ has no order (as in Definition~\ref{generic model}) AND $M$ is eventually $NIP$.

\medskip
It is worth mentioning that  Gannon's theorem  based on the idea of Simon's lemma, and our results/observations are based on  ideas of both of them. This paper is a kind of companion-piece to \cite{KP} and \cite{K-definable}, although here we are mainly
concerned with model-theoretic proofs of  variants  of    results from \cite{KP}. 
 
 \medskip
 This paper is organized as follows. In Section 2,   we fix some model theoretic conventions.   We will
 also prove  Theorem~A(1). (Cf. Theorem~\ref{Morley sequence}.)
 In Section 3, we will provide all necessary functional analysis notions, and introdce the notion of {\em eventual $NIP$}. We will
 also prove   Theorem~A(2). (Cf. Theorem~\ref{BFT-like}.)
 In Section 4, we will study generically stable types in arbitrary/countable theories.  We will
 also prove    Theorem B and Theorem~C. (Cf. Theorems~\ref{Thm B}, \ref{Pillay-Grothendieck}.) 
At the end paper we conclude some remarks/questions on future generalizations and applications of the results/observations.

\section{Convergent Morley sequences}
The   notation is standard, and a text such as \cite{Simon} will be sufficient background.  We
fix a  first order language $L$, a complete  countable  $L$-theory $T$ (not necessarily $NIP$), and a countable model $M$ of $T$.  The monster model is denoted by $\cal U$ and the space of global types in the variable $x$ is denoted by $S_x({\cal U})$ or $S({\cal U})$. 

\begin{Convention} In this paper, when we say that $(a_i)\subset {\cal U}$ is a sequence, we mean the usual notion in the sense of analysis. That is, every sequence is indexed by $\omega$. Similarly, we consider Morley sequences  indexed by $\omega$.
\end{Convention}

\begin{Convention} In this paper, a variable $x$ is a tuple of length $n$ (for $n<\omega$).\footnote{Although all arguments are true for infinite variables, to make the proofs readable, we consider finite tuples.} Sometimes we write $\bar x$ or $x_1,\ldots,x_n$ instead of $x$. All types are $n$-type (for $n<\omega$) unless explicitly stated otherwise. Similarly, a sequence  $(a_i)\subset {\cal U}$ is a sequence of tuples of  length $n$ (for $n<\omega$).
\end{Convention}

\begin{Convention} In this paper, when we say that $\phi$ is a formula, we mean a formula over $\emptyset$. 
	 Otherwise, we explicitly say that $\phi$ is an $L(A)$-formula for some set/model $A$. Although the structure of some important definitions and proofs does not depend on the parameter at all.
\end{Convention}

\begin{Definition}
	{\em  Let   $A\subset\cal U$ and $\phi(x_1,\ldots,x_n)\in L(A)$. We say that $\phi(x_1,\ldots,x_n)$ is {\em symmetric} if for any permutation $\sigma$ of $\{1,\ldots,n\}$, $$\models\forall\bar x\big(\phi(x_1,\ldots,x_n)\leftrightarrow\phi(x_{\sigma{(1)}},\ldots,x_{\sigma{(n)}})\big).$$ }
\end{Definition}

For a formula $\phi(x)$ (with or without parameters) and a sequence $(a_i)$ of $x$-tuples in $\cal U$, we write $\lim_{i\to\infty}\phi(a_i)=1$ if there is a natural number $n$ such that ${\cal U}\models\phi(a_i)$ for all $i\geq n$.  If $\lim_{i\to\infty}\neg\phi(a_i)=1$  we write $\lim_{i\to\infty}\phi(a_i)=0$.

For a formula $\phi(x_{i_0},\ldots,x_{i_k})$ and a sequence $(b_i)\in\cal U$, if there exists an $n_\phi$ such that for any $i_k>\cdots>i_0>n_\phi$ we have ${\cal U}\models\phi(b_{i_0},\ldots,b_{i_k})$, we write $\lim_{i_0<\cdots<i_k, \ i_0\to\infty} \phi(b_{i_0},\ldots,b_{i_k})=1$. 

\begin{Definition}
	{\em (i) Let $(b_i)$ be a sequence of elements in $\cal U$ and $A\subset\cal U$  a   set. The   {\em eventual Ehrenfeucht-Mostovski type}\footnote{The $EEM$-type is defined in \cite[Def.~4.3]{Gannon sequential}. It was extracted from the notion of eventual  indiscernible sequence in \cite{S-invariant}.} (abbreviated  {\em $EEM$-type}) of $(b_i)$ over $A$, which is denoted by $EEM((b_i)/A)$, is the following (partial) type in $S_\omega(A)$:
	$$\phi(x_0,\ldots,x_k)\in EEM((b_i)/A)  \iff \lim_{i_0<\cdots<i_k, \ i_0\to\infty} \phi(b_{i_0},\ldots,b_{i_k})=1.$$
	(ii) Let $(b_i)$ be a sequence of $\cal U$ and $A\subset\cal U$  a   set. The {\em  symmetric eventual Ehrenfeucht-Mostovski type} (abbreviated {\em $SEEM$-type}) of $(b_i)$ over $A$, which is denoted by $SEEM((b_i)/A)$, is the following partial type in $S_\omega(A)$:
	$$\big\{\phi=\phi(x_0,\ldots,x_n):\phi\in EEM((b_i)/A) \text{ and }\phi \text{ is symmetric} \big\}.$$ 
	Whenever $(b_i)$ is $A$-indiscernible, we sometimes write $SEM((b_i)/A)$ instead of  $SEEM((b_i)/A)$.
\newline
(iii) Let  $p(x)$ be a type in $S_\omega(A)$ (or  $S_\omega({\cal U})$). The  {\em symmetric type of $p$}, denoted by Sym$(p)$, is   the following partial type: $$\big\{\phi(x)\in p: \phi \text{ is symmetric} \big\}.$$ }
\end{Definition}
The  sequence $(b_i)$  is called {\em eventually  indiscernible over $A$} if $EEM((b_i)/A)$ is a {\em complete} type. In this case, for any $L(A)$-formula $\phi(x)$, the limit $\lim_{i\to\infty}\phi(b_i)$ is well-defined.

\begin{Fact}[\cite{Gannon sequential}, Fact 4.2] \label{Gannon fact2}
Let $ (b_i)$ be a sequence of elements  in $\cal U$
  and $A\subset \cal U$ such that $|A| = \aleph_0$.
Then there exists a subsequence $(c_i)$ of  $ (b_i)$  such that $(c_i)$ is eventually
indiscernible over $A$.
\end{Fact}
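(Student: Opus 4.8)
The plan is to combine the countability of the parameter set with a Ramsey-style extraction and a single diagonal argument. Since both $T$ and $A$ are countable, there are only countably many $L(A)$-formulas; fix an enumeration $\phi_0(x_0,\ldots,x_{k_0}),\phi_1(x_0,\ldots,x_{k_1}),\ldots$ of all of them, each using only finitely many of the variables $x_0,x_1,\ldots$. The goal is to produce one subsequence $(c_i)$ on which, for every $\phi_j$, the truth value on increasing tuples eventually stabilizes, i.e. so that $EEM((c_i)/A)$ decides each $\phi_j$ and is therefore complete.

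First I would treat one formula at a time. Given an infinite index set $I\subseteq\omega$ and a formula $\phi_j$ in $k_j+1$ variables, I color each increasing $(k_j+1)$-tuple $i_0<\cdots<i_{k_j}$ from $I$ by the truth value of $\phi_j(b_{i_0},\ldots,b_{i_{k_j}})\in\{0,1\}$. By the infinite Ramsey theorem for $(k_j+1)$-tuples and two colors, there is an infinite $I'\subseteq I$ that is monochromatic, so that $\phi_j$ is constant on all increasing $(k_j+1)$-tuples drawn from $I'$. I then iterate this to build a decreasing chain $\omega=I_{-1}\supseteq I_0\supseteq I_1\supseteq\cdots$ of infinite sets, where $I_j$ is obtained by applying this Ramsey step to $\phi_j$ inside $I_{j-1}$; thus $\phi_j$ takes a fixed value $\epsilon_j\in\{0,1\}$ on every increasing $(k_j+1)$-tuple from $I_j$. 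Finally I choose diagonal indices $i_0<i_1<i_2<\cdots$ with $i_m\in I_m$ (possible because each $I_m$ is infinite, so one may always pick $i_m\in I_m$ above $i_{m-1}$) and set $c_m:=b_{i_m}$.

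To verify eventual indiscernibility I would fix $\phi_j$ and any increasing tuple $j\le m_0<m_1<\cdots<m_{k_j}$. For each $t$ we have $i_{m_t}\in I_{m_t}\subseteq I_j$ since $m_t\ge j$, so $(i_{m_0},\ldots,i_{m_{k_j}})$ is an increasing tuple inside $I_j$, whence $\phi_j(c_{m_0},\ldots,c_{m_{k_j}})=\phi_j(b_{i_{m_0}},\ldots,b_{i_{m_{k_j}}})=\epsilon_j$. Taking $n_{\phi_j}=j$ then gives $\lim_{m_0<\cdots<m_{k_j},\,m_0\to\infty}\phi_j(c_{m_0},\ldots,c_{m_{k_j}})=\epsilon_j$. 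Since this holds for every $\phi_j$, the type $EEM((c_m)/A)$ decides every $L(A)$-formula and is therefore complete, i.e. $(c_m)$ is eventually indiscernible over $A$.

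The only genuine content is the Ramsey extraction, and the single step requiring care is the diagonalization: it is precisely \emph{eventual} (rather than full) indiscernibility that survives, since on the diagonal subsequence $\phi_j$ is guaranteed constant only on tuples all of whose indices are $\ge j$, while smaller indices may misbehave. I do not expect any serious obstacle beyond this bookkeeping; the countability hypothesis on $A$ is used exactly to enumerate the formulas so that one diagonal sequence simultaneously handles all of them.
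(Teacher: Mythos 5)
Your proof is correct and is essentially the same argument the paper relies on: the paper gives no self-contained proof of this Fact, deferring instead to Proposition~5.3 of \cite{Gannon sequential}, but the intended argument is exactly your iterated Ramsey extraction followed by a diagonal choice, as the paper itself indicates in the proof of Lemma~\ref{Key lemma} (``using Ramsey's theorem and a diagonal argument''). Your key bookkeeping point --- that on the diagonal subsequence $\phi_j$ is guaranteed constant only on increasing tuples whose indices are all at least $j$, which is precisely why one obtains \emph{eventual} rather than full indiscernibility --- is the one step requiring care, and you handled it correctly.
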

\begin{proof} 
	A generalization of this observation (for continuous logic) is proved in  Proposition~5.3 of \cite{Gannon sequential}.
\end{proof}

Let $A$ be a set/model and $p(x)$  a global $A$-invariant type. The Morley type (or sequence) of  $p(x)$ is denoted by $p^{(\omega)}$. (Cf. \cite{Simon}, subsection 2.2.1.) The restriction of $p(x)$ to $A$ is denoted by $p|_{A}$.
A realisation $(d_i:i<\omega)$ of $p^{(\omega)}|_A$ is called a Morley sequence of/in $p$ over $A$.

\begin{Lemma} \label{Key lemma}
	Let $p(x)\in S(\cal U)$ be invariant over  $A$, and $I=(d_i)$   a Morley sequence in $p$ over $A$.
	\newline  (i) If there is a sequence $(c_i)\in A$ such that $\lim tp(c_i/AI)=p|_{AI}$ and $(c_i)$ is eventually indiscernible over $AI$, then  $SEEM((c_i)/A)=\text{Sym}(p^{(\omega)}|_A)$.
	\newline  (ii)
	If $p$ is finitely satisfaible in $A$ and $|A|=\aleph_0$, then there is a sequence $(c_i)$ in $A$ such that  $\lim tp(c_i/AI)=p|_{AI}$ and $(c_i)$ is eventually indiscernible over $AI$. Therefore,  $SEEM((c_i)/A)=\text{Sym}(p^{(\omega)}|_A)$.
\end{Lemma}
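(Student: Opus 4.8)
The plan is to deduce (ii) from (i) by manufacturing the required sequence, and to prove (i) by a finite induction that peels the variables of a symmetric formula one at a time, thereby turning the converging sequence $(c_i)$ into the Morley sequence $I$ read in reverse order.

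For (ii), I first observe that $AI=A\cup I$ is countable and that $p|_{AI}$ is finitely satisfiable in $A$: any finite conjunction of its members belongs to $p$, hence is realized in $A$. Consequently $p|_{AI}$ lies in the closure of $\{tp(a/AI):a\in A\}$ in the type space $S(AI)$, and since $L$ and $AI$ are countable this space is compact and metrizable; hence there is a sequence $(a_i)\in A$ with $\lim tp(a_i/AI)=p|_{AI}$ (equivalently, enumerate the $L(AI)$-formulas and use finite satisfiability to meet longer and longer initial segments of $p|_{AI}$). Applying Fact~\ref{Gannon fact2} to the countable parameter set $AI$ yields a subsequence $(c_i)$ that is eventually indiscernible over $AI$; as subsequences inherit the limit, $(c_i)$ still satisfies $\lim tp(c_i/AI)=p|_{AI}$, and part~(i) then applies and gives the final assertion.

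For (i) it suffices to show, for every \emph{symmetric} $\phi(x_0,\ldots,x_n)\in L(A)$, that
$$\lim_{i_0<\cdots<i_n,\ i_0\to\infty}\phi(c_{i_0},\ldots,c_{i_n})=1 \iff \phi\in p^{(\omega)}|_A.$$
I would prove this by a downward induction that replaces $c_{i_n},c_{i_{n-1}},\ldots,c_{i_0}$ successively by the Morley points $d_0,d_1,\ldots,d_n$. In the step that removes the current largest index $i_{n-m}$, the points $d_0,\ldots,d_{m-1}$ already substituted lie in $I$, so the hypothesis $\lim tp(c_i/AI)=p|_{AI}$, restricted to $Ad_0\cdots d_{m-1}$, gives for all large $i_0<\cdots<i_{n-m-1}$ that
$$\lim_{i_{n-m}\to\infty}\phi(c_{i_0},\ldots,c_{i_{n-m-1}},c_{i_{n-m}},d_{m-1},\ldots,d_0)=1 \iff \phi(c_{i_0},\ldots,c_{i_{n-m-1}},x,d_{m-1},\ldots,d_0)\in p,$$
and the Morley condition $d_m\models p|_{Ad_0\cdots d_{m-1}}$ rewrites the right-hand side as $\models\phi(c_{i_0},\ldots,c_{i_{n-m-1}},d_m,d_{m-1},\ldots,d_0)$. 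Using eventual indiscernibility over $AI$ to pass from these inner limits to the single multi-index limit, the induction terminates with
$$\lim_{i_0<\cdots<i_n,\ i_0\to\infty}\phi(c_{i_0},\ldots,c_{i_n})=1 \iff\ \models\phi(d_n,d_{n-1},\ldots,d_0).$$
Since $\phi$ is symmetric, $\models\phi(d_n,\ldots,d_0)$ is equivalent to $\models\phi(d_0,\ldots,d_n)$, i.e.\ to $\phi\in p^{(\omega)}|_A$, so that, restricted to symmetric $L(A)$-formulas, the two conditions agree, which is the desired equality $SEEM((c_i)/A)=\mathrm{Sym}(p^{(\omega)}|_A)$.

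The main obstacle is the bookkeeping in the inductive step: the substituted Morley points must be chosen in the order $d_0,d_1,\ldots$ so that at each stage the already-substituted parameters still lie in $I$ (keeping the convergence $\lim tp(c_i/AI)=p|_{AI}$ usable) while the next point realizes $p$ over precisely those parameters (guaranteed by the forward-building Morley condition). This is exactly what forces the Morley sequence to appear \emph{reversed} as $\phi(d_n,\ldots,d_0)$, and it is the symmetry of $\phi$ that repairs the order --- confirming the paper's thesis that convergence is controlled by symmetric formulas. Some care is also needed to justify exchanging the iterated inner limits for the single multi-index limit, which is the one place eventual indiscernibility over $AI$ is invoked.
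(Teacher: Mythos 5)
Your proof is correct, and your part (ii) coincides with the paper's argument (finite satisfiability puts $p|_{AI}$ in the closure of the realized types, metrizability of the countable type space --- or a direct enumeration --- gives the sequence, then Fact~\ref{Gannon fact2} and part (i) finish). For part (i), however, you organize the induction differently from the paper, and the difference is worth recording. The paper inducts on the arity of \emph{symmetric} formulas: in the step for $\phi(x_0,\ldots,x_k)$ it freezes the \emph{first} variable at a parameter $c\in A$, applies the induction hypothesis to the symmetric formula $\phi(c,x_1,\ldots,x_k)$ to replace the whole tail block by $d_1,\ldots,d_k$ at once, and then uses convergence to replace $c_n$ by $d_{k+1}$; symmetry is therefore load-bearing twice in every step (to keep the formula inside the class covered by the induction hypothesis, and to reorder $\phi(d_{k+1},d_1,\ldots,d_k)$). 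You instead peel variables from the \emph{largest} index down, each step being a single application of the convergence hypothesis plus the forward Morley condition $d_m\models p|_{Ad_0\cdots d_{m-1}}$; no symmetry is needed during the substitutions, so your argument actually establishes the stronger, unrestricted statement that for \emph{every} $L(A)$-formula $\phi(x_0,\ldots,x_n)$ one has $\phi\in EEM((c_i)/A)$ iff $\models\phi(d_n,\ldots,d_0)$, i.e.\ the full $EEM$-type equals the Morley type read in reverse, with symmetry invoked exactly once at the end to undo the reversal. This is essentially the route of Gannon's Lemma~4.5, which the paper cites as its source, and it matches the footnote in the proof of Theorem~\ref{Morley sequence} (Tanovi\'c's remark) about reversing the $EEM$-type; what your route buys is the more general identification and a cleaner isolation of where symmetry matters, while the paper's version buys a proof that never leaves the class of symmetric formulas, in line with its thesis that convergence is governed by symmetric formulas alone. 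The one point to keep explicit in your write-up is the role of eventual indiscernibility: it is what guarantees that the multi-index limit \emph{exists} for the original formula; once that is granted, its equality with your iterated inner limits is automatic, and the intermediate stages inherit existence (with the same value) from the induction itself.
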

\begin{proof}
	(i): Suppose that there is a sequence $(c_i)$ such that $\lim tp(c_i/AI)=p|_{AI}$ and $(c_i)$ is eventually indiscernible over $AI$ $(\dagger)$.
	Set $J=(c_i)$.

	We show that    $SEEM((c_i)/A)=SEM((d_i)/A)=\text{Sym}(p^{(\omega)}|_A)$. We remind the reader
	that $SEM((d_i)/A)=\text{Sym}(p^{(\omega)}|_A)$ follows   from the definition of a Morley sequence. The proof is by induction on symmetric formulas. 
	The base case works. Indeed, for any $L(A)$-formula $\phi(x_0)$, $\phi(x_0)\in SEEM((c_i)/A)\iff \lim\phi(c_i)=1\iff \phi(x_0)\in p\iff  \phi(x_0)\in SEM((d_i)/A)$.

	The induction hypothesis is that for any symmetric  formula $\phi(x_0,\ldots,x_{k-1})$ in $L(A)$, $\phi(x_0,\ldots,x_{k-1})\in SEEM((c_i)/A)$ if and only if $\phi(x_0,\ldots,x_{k-1})\in SEM((d_i)/A)$. 
	
	Let $\phi(x_0,\ldots,x_{k})$ be a symmetric $L(A)$-formula  $(\ddagger)$. Clearly, for any $c\in A$, the $L(A)$-formula $\phi(c,x_1,\ldots,x_{k})$ is symmetric. Therefore, by the induction hypothesis,
	
	  $\lim_{i\to\infty}   \phi(c,c_{i+1},\ldots,c_{i+k})= \phi(c,d_1,\ldots,d_{k})$   \ \  $(*)$.
	  
	  \medskip\noindent
	  On the other hand, since $\lim_{n\to\infty} tp(c_n/AI)=p|_{AI}$, we have 
	  
	  $\lim_{n\to\infty}\phi(c_n,d_1,\ldots,d_{k})=\phi(d_{k+1},d_1,\ldots,d_{k})$ \ \  $(**)$.
	  
	 \medskip\noindent
	 To summarize, for large $n$,
	 	\begin{align*}
	\lim_{j\to\infty}\phi(c_j,c_{j+1},\ldots,c_{j+k}) &  \stackrel{(\dagger)}{=} \lim_{n\to\infty}\lim_{i\to\infty}\phi(c_n,c_{i+1},\ldots,c_{i+k})   \\
	 &  \stackrel{(*)}{=}  \lim_{n\to\infty}\phi(c_n,d_1,\ldots,d_{k})  \\
	 & \stackrel{(**)}{=}  \phi(d_{k+1},d_1,\ldots,d_{k}) \\
	 &  \stackrel{(\ddagger)}{=} \phi(d_1,\ldots,d_{k+1}).
	 \end{align*}
	 This means that $\phi(\bar x)\in SEEM(J/A)$ iff $\phi(\bar x)\in SEM(I/A)$.

\noindent\medskip
(ii):	Let $I'$ be a Morley sequence in $p$ over $A$. Since $T$ and $A$ are countable, and $p$ is finitely satisfiable in $A$, there is a sequence $(c_i)$ in $A$ such that $\lim tp(c_i/AI')=p|_{AI'}$. (Notice that the closure of $\{tp(a/AI'):a\in  A\}\subset S_x(AI')$ is second-countable and compact,\footnote{Recall that  a compact Hausdorff space is metrizable if and only if it is second-countable.} and so metrizable. Therefore, there is a   sequence  $(c_i)\in A$ such that $\lim tp(c_i/AI')=p|_{AI'}$.)\footnote{Another argument that is more model-theoretic is given  in the first paragraph of the proof of \cite[Lemma 4.6]{Gannon sequential}.}

 By Fact~\ref{Gannon fact2}, we can assume that $(c_i)$ is eventually indiscernible over $AI$ $(\dagger)$. That is, the type $EEM((c_i)/AI)$ is complete. (Notice that, as $AI$ and $T$ are countable, using Ramsey's theorem and a diagonal argument, there is a subsequence of $(c_i)$ which is eventually indiscernible over $AI$.)

 By (i), $SEEM((c_i)/A)=\text{Sym}(p^{(\omega)}|_A)$.
\end{proof}
\begin{Remark}
  The proof of Lemma~\ref{Key lemma} is essentially the same as  \cite[Lemma~4.5]{Gannon sequential}.  The difference is that we don't need all formulas, but only symmetric ones. Notice that it is not necessary to assume that $A$ is a model. It is worth recalling that  Gannon's result is based on an idea   of Simon \cite[Lemma~2.8]{S-invariant}.
\end{Remark}

Lemma~\ref{Key lemma} above discusses converging of tuples, although  in the rest of paper,  converging of tuples means convergence of types/tuples over the monster model, but not small sets/models:
\begin{Definition} \label{covergence}
{\em We say that a  sequence $(d_i)\in\cal U$ of $x$-tuples  {\em converges} (or {\em is convergent})  if the sequence $(tp(d_i/{\cal U}):i<\omega)$ converges in the logic topology. Equivalently, for any $L({\cal U})$-formula $\phi(x)$, the truth value of  $(\phi(d_i):i<\omega)$ is eventually constant. If $(tp(d_i/{\cal U}):i<\omega)$ converges to a type $p$, then we write $\lim tp(d_i/{\cal U})=p$ or $tp(d_i/{\cal U})\to p$. Notice that $tp(d_i/{\cal U})\to p$ iff for any $L({\cal U})$-formula $\phi(x)$, $$\lim\phi(d_i)=1 \iff \phi(x)\in p.$$ }
\end{Definition}

\begin{Fact}  \label{Rosenthal lemma}
Let $(d_i)$ be a sequence in $\cal U$  of $x$-tuples. Then the following are equivalent:
\newline 
(i) $(d_i)$ has a subsequence with  no convergent subsequence.
 \newline 
(ii) There are a subsequence $(d_i')\subseteq(d_i)$ and a formula $\phi(x,y)$ (with or without parameters) such that for all  (finite) disjoint subsets   $E,F\subseteq\Bbb N$, $$\models\exists y\Big(\bigwedge_{i\in E}\phi(d_i',y)\wedge \bigwedge_{i\notin F}\neg\phi(d_i',y)\Big).$$
  \newline 
 Furthermore, suppose that $(d_i)$ is indiscernible. Then each of (i), (ii) above is also equivalent to (iii) below: 
  \newline 
 (iii) The condition (ii) holds for any subsequence of $(d_i)$. More precisely, there is a formula $\phi(x,y)$ (with or without parameters) such that for any subsequence $(d_i')\subseteq(d_i)$ and   for all  (finite) disjoint subsets   $E,F\subseteq\Bbb N$, $$\models\exists y\Big(\bigwedge_{i\in E}\phi(d_i',y)\wedge \bigwedge_{i\notin F}\neg\phi(d_i',y)\Big).$$ 
\end{Fact}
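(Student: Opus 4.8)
The plan is to read this as the model-theoretic form of Rosenthal's $\ell^1$-dichotomy: $\phi$-independence of a subsequence is precisely the obstruction to extracting a convergent subsequence. I would first dispatch (ii)$\Rightarrow$(i), which is a compactness argument. Fix a subsequence $(d_i')$ and a formula $\phi(x,y)$ witnessing (ii). Given any further subsequence $(d_{i_k}')$, the partial type $\Sigma(y)=\{\phi(d_{i_{2k}}',y):k<\omega\}\cup\{\neg\phi(d_{i_{2k+1}}',y):k<\omega\}$ is finitely satisfiable, since each finite fragment splits the indices involved into disjoint sets $E$ (where $\phi$ is required) and $F$ (where $\neg\phi$ is required), hence is consistent by the independence clause; by saturation it is realized by some $b\in\mathcal{U}$. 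Then $\phi(x,b)$ is an $L(\mathcal{U})$-formula whose truth value oscillates along $(d_{i_k}')$, so that subsequence does not converge. As the subsequence was arbitrary, $(d_i')$ witnesses (i).

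For (i)$\Rightarrow$(ii), let $(e_i)$ be a subsequence of $(d_i)$ with no convergent subsequence. For each $L$-formula $\phi(x,y)$ ($y$ a finite tuple) set $A^\phi_i=\{b\in\mathcal{U}^{|y|}:\models\phi(e_i,b)\}$ and apply Rosenthal's dichotomy to the sequence of sets $(A^\phi_i)_i$: it has either a subsequence whose indicators converge pointwise, or a Boolean-independent subsequence. Boolean independence of $(A^\phi_i)$ along a subsequence is exactly clause (ii) for that $\phi$, so if the independent alternative ever occurs we are done. Otherwise I enumerate the countably many such $L$-formulas as $\phi_0,\phi_1,\dots$ and extract subsequences iteratively: at stage $n$ I pass to a subsequence of the current one along which $\phi_n(\cdot,b)$ is eventually constant for every $b$ (this exists by the convergent alternative, unless the independent alternative holds, in which case (ii) is already proved and we stop), a property inherited by all further subsequences. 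The diagonal subsequence $(e_i')$ is then eventually constant for $\phi_n(\cdot,b)$ for every $n$ and every $b$; since every $L(\mathcal{U})$-formula in $x$ has the form $\phi_n(x,b)$, this says $(e_i')$ converges, contradicting the choice of $(e_i)$. Hence the independent alternative must occur, giving (ii).

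For the final clause, assume $(d_i)$ is indiscernible; enlarging the base I may assume $(d_i)$ is indiscernible over a set $A$ containing the parameters of the formula $\phi$ produced above, so $\phi\in L(A)$. Clearly (iii)$\Rightarrow$(ii) (take the whole sequence), whence (iii)$\Rightarrow$(i) by the first paragraph. For (ii)$\Rightarrow$(iii), observe that for disjoint finite $E,F$ the truth value of the sentence $\exists y\bigl(\bigwedge_{i\in E}\phi(d_{i},y)\wedge\bigwedge_{i\in F}\neg\phi(d_i,y)\bigr)$ depends only on the sizes of $E$ and $F$ and on the order type of the chosen indices; by indiscernibility over $A$ it therefore holds for one increasing choice of indices iff it holds for every such choice. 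Thus the single formula $\phi$ witnesses the independence clause along every subsequence of $(d_i)$, which is exactly (iii).

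The genuine content, and the step I expect to be the main obstacle, is the extraction inside (i)$\Rightarrow$(ii): producing a Boolean-independent subsequence out of a sequence of sets possessing no pointwise-convergent subsequence is precisely Rosenthal's lemma, the combinatorial heart of the Bourgain--Fremlin--Talagrand theorem, and here one either cites the analytic result or replays its Ramsey-type proof. A secondary, purely bookkeeping subtlety is the passage from ``convergent for each formula separately'' to ``convergent'', which the diagonalization handles and which is exactly where countability of $T$ enters.
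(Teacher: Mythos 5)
Your overall route is the same as the paper's: (ii)$\Rightarrow$(i) by compactness/saturation of the monster, (i)$\Rightarrow$(ii) by a diagonal extraction over the countably many parameter-free formulas (exactly where countability of $T$ enters, as in the paper's footnote) followed by Rosenthal's dichotomy applied to the single offending formula, and the ``furthermore'' clause by indiscernibility. Your first two paragraphs are correct and in fact supply details that the paper's proof leaves implicit (the paper just cites Rosenthal and gestures at the diagonal argument).

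The one genuine flaw is in your (ii)$\Rightarrow$(iii). The move ``enlarging the base I may assume $(d_i)$ is indiscernible over a set $A$ containing the parameters of $\phi$'' is not available: both the sequence and the parameters are already given, and an $\emptyset$-indiscernible sequence need not be indiscernible over additional parameters (the parameters could, for instance, be elements of the sequence itself, or a tuple that splits it). The repair is cheap. Write $\phi(x,y)=\psi(x,y,c)$ with $\psi$ parameter-free and absorb $c$ into the existential quantifier: pass to $\psi'(x,yz)=\psi(x,y,z)$. Clause (ii) for $\phi$ implies clause (ii) for the parameter-free $\psi'$ (the same witness $y$ together with $z=c$ works), and Boolean independence for $\psi'$ still blocks convergence; now indiscernibility over $\emptyset$ genuinely does transfer each finite independence sentence from the witness subsequence to every increasing tuple of the $d_i$, hence to every subsequence, which is (iii). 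Alternatively, route (ii)$\Rightarrow$(i)$\Rightarrow$(ii)$\Rightarrow$(iii), using that your own second paragraph already produces a parameter-free witness, and then run your indiscernibility argument with $A=\emptyset$.
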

\begin{proof}
	The direction (i)~$\Rightarrow$~(ii) follows from one of the prettiest result in the Banach space theory due to Rosenthal, Theorem~1 in \cite{Ros}. 
	(See also Lemma~3.12 of \cite{K3} or Appendix B in \cite{K-Banach}.)
	Indeed, as $T$ is countable, we can assume\footnote{If not, using a diagonal argument, we can find a convergent  subsequence of any subsequence.} that there is a  subsequence $(c_i)\subseteq(d_i)$ and a formula $\phi(x,y)$ such that the sequence $(\phi(c_i,y):i<\omega)$ has a subsequence with  no convergent subsequence. Now use Rosenthal's theorem for it.
	 (On the other hand, notice that, as $T$ is countable, every {\bf complete} type can be coded by a function on a suitable space (cf. \cite[Appendix~A]{K-GC}). This leads to an alternative argument.)
\newline	
The direction (ii)~$\Rightarrow$~(iii) follows from indiscernibility.
\newline	
(iii)~$\Rightarrow$~(ii)~$\Rightarrow$~(i) are evident.
\end{proof}
We emphasize that, in Fact~\ref{Rosenthal lemma}, the direction (i)~$\Longrightarrow$~(ii) needs countability of the theory. On the other hand, it is easy to verify that, this fact holds for real-valued functions (or types in continuous logic).

\begin{Theorem} \label{Morley sequence}
	Let $T$ be a countable theory, $M$ a countable model,\footnote{Tanović pointed out to us that it is not necessary to assume that $M$ is a model. Cf.  Remark~\ref{A=any}(iv).}  and $p(x)\in S(\cal U)$ a global type which is finitely satisfiable in $M$. Let $(d_i)$ be a Morley sequence of $p$ over $M$. If $(d_i)$ converges then there is a sequence $(c_i)$ in $M$ such that $tp(c_i/{\cal U})\to p$.
\end{Theorem}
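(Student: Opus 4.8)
The plan is to obtain the witnessing sequence directly from Lemma~\ref{Key lemma} and to use the convergence of the Morley sequence $(d_i)$ only in order to control parameters lying outside $MI$. Write $I=(d_i)$. Since $p$ is finitely satisfiable in the countable model $M$, it is $M$-invariant (finitely satisfiable types over $M$ are $M$-invariant), and Lemma~\ref{Key lemma}(ii) applied with $A=M$ furnishes a sequence $(c_i)$ in $M$ which is eventually indiscernible over $MI$, satisfies $\lim_i tp(c_i/MI)=p|_{MI}$, and for which $SEEM((c_i)/M)=\text{Sym}(p^{(\omega)}|_M)=SEM(I/M)$. My claim is that this $(c_i)$ already satisfies $tp(c_i/{\cal U})\to p$. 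The argument splits into two independent points: that $(c_i)$ is relatively sequentially compact in $S_x({\cal U})$, and that its only possible sequential limit is $p$.

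First I would rule out a shattered subsequence of $(c_i)$. Suppose some subsequence $(c_i')$ were shattered by a formula $\phi(x,y)=\phi_0(x,y,\bar b)$ with $\bar b\in{\cal U}$, as in Fact~\ref{Rosenthal lemma}(ii). The key device is to pass to the parameter-free symmetric formula
\[
\theta_n(x_0,\dots,x_{n-1}):=\exists z\,\exists (y_S)_{S\subseteq n}\ \bigwedge_{S\subseteq n}\Big(\bigwedge_{i\in S}\phi_0(x_i,y_S,z)\wedge\bigwedge_{i\notin S}\neg\phi_0(x_i,y_S,z)\Big),
\]
which asserts that $x_0,\dots,x_{n-1}$ are shattered by \emph{some} instance of $\phi_0$. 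Each $\theta_n$ is symmetric and over $\emptyset$, and the assumed shattering of $(c_i')$ yields $\theta_n(c_{i_0}',\dots,c_{i_{n-1}}')$ for every increasing tuple from the subsequence; since $EEM((c_i)/M)$ is complete and these tuples have arbitrarily large indices, eventual indiscernibility forces $\theta_n\in EEM((c_i)/M)$, hence $\theta_n\in SEEM((c_i)/M)=SEM(I/M)$. Thus arbitrarily large finite subsets of the indiscernible sequence $I$ are shattered by instances of $\phi_0$, so by compactness and saturation a single parameter $\bar b^\ast$ shatters all of $I$; by Fact~\ref{Rosenthal lemma} this means $I$ has a subsequence with no convergent subsequence, contradicting the convergence of $(d_i)$. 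Hence no subsequence of $(c_i)$ is shattered, and by Fact~\ref{Rosenthal lemma} every subsequence of $(c_i)$ has a convergent subsequence.

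It then remains to identify the limit: I would show that any convergent subsequence $(c_i'')\to q$ has $q=p$, whence, since every subsequence of $(c_i)$ has a convergent sub-subsequence and all such limits equal $p$, we conclude $tp(c_i/{\cal U})\to p$. Each $c_i''\in M$, so $q$ is finitely satisfiable in $M$, hence $M$-invariant, and $q|_{MI}=p|_{MI}$. To upgrade agreement on $MI$ to agreement on all of ${\cal U}$ I would run a double-limit (Grothendieck-style) exchange using the Morley sequence $I$ as test parameters: for a formula $\psi(x,\bar a)$ one compares the two iterated limits of the array $\psi(c_i'',d_j)$, invoking that $(d_j)$ converges, that $q|_{MI}=p|_{MI}$, and the $M$-invariance of $p$ and $q$, with the genericity of $I$ used to reduce an arbitrary parameter $\bar a$ to the behaviour seen along $I$. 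The no-shattering established above is precisely the relative-compactness hypothesis that makes the two iterated limits agree, and this agreement forces $q=p$.

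I expect this last step -- pinning the limit down as $p$, rather than merely showing that $(c_i)$ clusters -- to be the main obstacle. Excluding shattering is clean once the symmetric formulas $\theta_n$ are written down, but converting relative sequential compactness into genuine convergence to the \emph{prescribed} type $p$ is exactly where the convergence of $(d_i)$ and the finite satisfiability of $p$ enter in an essential (double-limit) way; this is the model-theoretic shadow of the Bourgain--Fremlin--Talagrand and Grothendieck phenomena around which the paper is organised.
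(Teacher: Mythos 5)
Your first half is sound and is essentially the paper's own argument (its Claim~2 inside the proof of Theorem~\ref{Morley sequence}): you obtain $(c_i)$ from Lemma~\ref{Key lemma}, rule out a shattered subsequence by transferring symmetric shattering formulas through $SEEM((c_i)/M)=\mathrm{Sym}(p^{(\omega)}|_M)=SEM(I/M)$, and contradict the convergence of $(d_i)$ via Fact~\ref{Rosenthal lemma}; your device of existentially quantifying out the parameter $z$ so that $\theta_n$ is symmetric and parameter-free is a nice touch that the paper glosses over. The gap is exactly where you predicted it: identifying every subsequential limit $q$ with $p$. Your proposed mechanism --- a Grothendieck-style exchange of iterated limits for the array $\psi(c_i'',d_j)$ --- cannot work as described. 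First, the disagreement parameter never enters the array: if $p\vdash\phi(x,b)$ and $q\vdash\neg\phi(x,b)$ for some $b\in{\cal U}$, nothing in $\psi(c_i'',d_j)$ sees $b$, and no mechanism is given for ``reducing an arbitrary parameter to behaviour along $I$''; $I$ is a Morley sequence over $M$ only, so $tp(b/MI)$ is completely uncontrolled. Second, and more fundamentally, commutation of double limits against arbitrary parameters of ${\cal U}$ is (by Grothendieck/Pillay) equivalent to $p$ being \emph{definable} over $M$, which does not follow from the hypotheses of this theorem: under them $p$ is only $DBSC$/Baire~1 definable, and upgrading that to genuine definability needs an extra assumption such as $NSOP$ (this is precisely the content of (v)$\Rightarrow$(ii) in Theorem~\ref{Thm B}). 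The no-shattering condition you established is the Rosenthal/BFT ($NIP$-type) hypothesis, which yields Baire~1 limits, not commuting double limits (an $NOP$-type conclusion); conflating the two is the error. Note also that $\lim_j tp(d_j/{\cal U})$ need not equal $p$ --- that would be generic stability --- so limits along $I$ do not recover $p$-membership of formulas with parameters outside $MI$.

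What is actually needed, and what the paper does, is the following. (Claim~0) From $q|_{MI}=p|_{MI}$ and the $M$-invariance of both types, an induction shows $p^{(\omega)}|_M=q^{(\omega)}|_M$. (Claim~1) If $p\neq q$, witnessed by $\phi(x,b)$, build a sequence $(a_i)$ realizing alternately $p|_{Ma_{<i}b}$ and $q|_{Ma_{<i}b}$. By Claim~0 this sequence is $M$-indiscernible of type $p^{(\omega)}|_M$, and $\phi(a_i,b)$ holds iff $i$ is even; by the backward direction of \cite[Lemma~2.7]{Simon}, indiscernibility plus infinite alternation gives ${\cal U}\models\theta_{n,\phi}(a_1,\ldots,a_n)$ for all $n$, and since $\theta_{n,\phi}$ is symmetric it lies in $\mathrm{Sym}(p^{(\omega)}|_M)=SEM((d_i)/M)$; hence $(d_i)$ is shattered by $\phi$ and cannot converge, a contradiction. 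This alternating-sequence construction is the missing idea: it is what converts a disagreement of $p$ and $q$ at an \emph{arbitrary} parameter $b$ into a symmetric, parameter-free statement that transfers to the Morley sequence $I$, which is the only place the hypothesis of convergence can be used.
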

\begin{proof}
 By    Lemma~\ref{Key lemma}, we can assume that  there is a sequence $(c_i)$ in  $M$ such that $tp(c_i/M\cup(d_i))\to p|_{M\cup(d_i)}$ and $SEEM((c_i)/M)=\text{Sym}(p^{(\omega)}|_M)$. We show that  $tp(c_i/{\cal U})\to p$. Let $q$ be an  accumulation point  of $\{tp(c_i/{\cal U}):i\in\omega\}$. Then $q|_{M\cup(d_i)}=p|_{M\cup(d_i)}$. 
 Notice that, as $q$ is finitely satisfiable in $M$ (and so $M$-invariant), the type $q^{(\omega)}$ is well-defined.
 
 	\vspace{4pt}
 \underline{Claim 0}: $p^{(\omega)}|_M=q^{(\omega)}|_M$.
 
 \emph{Proof}: The proof is by induction. The base case is $q|_{M\cup(d_i)}=p|_{M\cup(d_i)}$.
 The induction hypothesis is that $p^{(n+1)}|_M=q^{(n+1)}|_M$. Let $\phi(x_{n+1},x_n,\ldots,x_0)\in L(M)$, and suppose that $p_{x_{n+1}}\otimes p^{(n+1)}_{\bar x} \vdash\phi(x_{n+1},{\bar x})$,  where ${\bar x}=(x_n,\ldots,x_0)$. Since $(d_i)$ is a Morley sequence in $p$ over $M$, $\models\phi(d_{n+1},{\bar d})$ where ${\bar d}=(d_n,\ldots,d_0)$. By definition of Morley sequence, $p_{x_{n+1}}\vdash\phi(x_{n+1},{\bar d})$ and ${\bar d}\models p^{(n+1)}_{\bar x}|_M$. By the hypothesis of induction, ${\bar d}\models q^{(n+1)}_{\bar x}|_M$. By the base case, $q_{x_{n+1}}\vdash\phi(x_{n+1},{\bar d})$, and so by definition, 
 $q_{x_{n+1}}\otimes q^{(n+1)}_{\bar x} \vdash\phi(x_{n+1},{\bar x})$.
 \hfill$\dashv_{\text{claim 0}}$

	\vspace{4pt}
	\underline{Claim 1}: $p=q$.
	
	\emph{Proof}: 
	If not, assume for a contradiction that $p\vdash\phi(x,b)$ and $q\vdash\neg\phi(x,b)$ for some $b\in\cal U$ and formula $\phi(x,y)$ (without parameters). We inductively build  a sequence $(a_i)$ as follows:
	
	 $\bullet$ if $i$ is even, $a_i\models p|_{M\cup\{a_1,\ldots,a_{i-1},b\}}$;
	  
	 $\bullet$ if $i$ is odd, $a_i\models q|_{M\cup\{a_1,\ldots,a_{i-1},b\}}$.
	
	As $p^{(\omega)}|_M=q^{(\omega)}|_M$,  the sequence $(a_i)$  is indiscernible and its type over $M$ is  $p^{(\omega)}|_M$. Moreover,  $\phi(a_i,b)\iff i \text{ is even}$.

	As $(a_i)$ is indiscernible, using the backward direction of \cite[Lemma~2.7]{Simon}, ${\cal U}\models\theta_{n,\phi}(a_1,\ldots,a_n)$\footnote{According to that argument of Claim~0, sometimes it is better to `reverse' the definition of EEM type: replace each $\phi(x_1,\ldots,x_n)$ by
		$\phi(x_n,\ldots,x_1)$. This was suggested to us by  Tanović. Although we still continue the previous arrangement.}  where	
$$\theta_{n,\phi}(x_1,\ldots,x_n)=\forall F\subseteq \{1,\ldots,n\}\exists y_F\Big(\bigwedge_{i\in F}\phi(x_i,y_F)\wedge \bigwedge_{i\notin F}\neg\phi(x_i,y_F)\Big).$$	
(Notice that $\theta_{n,\phi}$ is symmetric, and so $\theta_{n,\phi}\in\text{Sym}(p^{(\omega)}|_M)$.) This means that $\models\theta_{n,\phi}(d_1,\ldots,d_n)$ for all $n$.	Therefore, for any infinite subset $I\subseteq\Bbb N$, the set $$\Sigma_I^\phi(y)=\Big\{ \bigwedge_{i\in I\cap\{1,\ldots,n\}}\phi(d_i,y)\wedge \bigwedge_{i\notin I\cap\{1,\ldots,n\}}\neg\phi(d_i,y): n\in{\Bbb N}\Big\}$$ is a partial type. This means that the sequence $\phi(d_i,y)$ does not converge (and even it has no  convergent subsequence), a contradiction. (Alternatively, as both $(a_i),(d_i)$ are Morley sequence over $M$, there is an automorphism $\sigma\in Aut({\cal U},M)$ which maps $a_i$ to $d_i$. Then $\phi(d_i,\sigma(b))$ converges iff $\phi(a_i,b)$ converges.\footnote{This was suggested to us by Tanović and the referee, independently.})
\hfill$\dashv_{\text{claim 1}}$ 

	\vspace{4pt}
\underline{Claim 2}: The sequence $(tp(c_i/{\cal U}):i<\omega)$ converges. 

	\emph{Proof}: If not,  without loss of generality we can assume that it has no convergent subsequence. (If $(c_i)$ has a convergent subsequence, we can just choose it to be our sequence and explain why it converges to $p$.) As $T$ is countable,  there is a formula $\psi(x,y)$ (with or without parameters) such that, the sequence $(\psi(c_i,y):i<\omega)$ has no convergent subsequence.\footnote{If not, using a diagonal argument one can show the sequence $(tp(c_i/{\cal U}):i<\omega)$ has a convergent subsequence.}  Then, by Fact~\ref{Rosenthal lemma},  for any infinite subset $I\subseteq\Bbb N$, the set
$$\Sigma_I^\psi(y)=\Big\{ \bigwedge_{i\in I\cap\{1,\ldots,n\}}\psi(c_i,y)\wedge \bigwedge_{i\notin I\cap\{1,\ldots,n\}}\neg\psi(c_i,y): n\in{\Bbb N}\Big\}$$ is a partial type. As $\theta_{n,\psi}$ is symmetric, this means that  $\theta_{n,\psi}\in SEEM((c_i)/M)$, and so $\models\theta_{n,\psi}(d_1,\ldots,d_n)$ for all $n$. Equivalently, $(\psi(d_i,y):i<\omega)$ does not converge, a contradiction. \hfill$\dashv_{\text{claim 2}}$

\medskip
Since $(tp(c_i)/{\cal U}:i\in\omega)$ converges to say a type $r$,  this type is in the topological closure of $\{tp(c_i/{\cal U}):i\in\omega\}$. Hence by the first portion of the argument, $r=p$.
\end{proof}

\begin{Remark} \label{A=any}
	(i) 	Let $T$ be a (countable or uncountable) theory, $M$ a model,  and $p(x)\in S(\cal U)$ a global type.	The argument of Claim~1 in the proof of Theorem~\ref{Morley sequence} shows that if there exists a Morley sequence of  $p$ which is convergent, then {\bf any}  Morley sequence of $p$ is convergent.
		\newline
		(ii) Let $T$ be a (countable or uncountable) theory, $M$ a model,  and $p(x),q(x)$ two global $M$-invariant types. If the Morley sequence of $p$ is convergent and $p^{(\omega)}|_M=q^{(\omega)}|_M$, then the argument of  Claim~1 in the proof of Theorem~\ref{Morley sequence} shows that $p=q$.\footnote{Notice that we can assume $p^{(\omega)}|_\emptyset=q^{(\omega)}|_\emptyset$ because the formula $\phi(x,y)$ in Claim~1 has no parameters.} As $T$ is arbitrary, this is a generalization of Proposition~2.36 of \cite{Simon}. (See also Lemma~2.5 of \cite{HP}.)
		\newline
		(iii) There is a converse to Theorem~\ref{Morley sequence}:
		Let $T$ be a countable theory, $M$ a countable model, and $p(x)\in S(\cal U)$ a global type which is finitely satisfiable in $M$.
		If there is a sequence $(c_i)$ in $M$ such that $tp(c_i/{\cal U})$ $DBSC$-converges to $p$ (as in Definition~\ref{DBSC converge}), then some/any Morley sequence of $p$ (over $M$) is convergent. (See the argument of the direction (ii)~$\Longrightarrow$~(iii) of Theorem~\ref{BFT-like}.)
		\newline
		(iv) (Tanović) {Proof of Claim~2 in  Theorem~\ref{Morley sequence}:} If not, there are formulas $\psi(x,y)$ (without parameters) and $b\in\cal U$ such that both sets  $C_1=\{c_i:\models\psi(c_i,b)\}$ and $C_2=\{c_i:\models\neg\psi(c_i,b)\}$ are infinite. Let $p_1,p_2$ be  accumulation points  of $\{tp(c_i/{\cal U}):c_i\in C_1\}$ and $\{tp(c_i/{\cal U}):c_i\in C_2\}$, respectively. Notice that $p_1|_{(d_i)}=p_2|_{(d_i)}$.
		 We inductively build  a sequence $(a_i)$ as follows:
		
		$\bullet$ if $i$ is even, $a_i\models p_1|_{\{a_1,\ldots,a_{i-1}\}}$;
		
		$\bullet$ if $i$ is odd, $a_i\models p_2|_{\{a_1,\ldots,a_{i-1}\}}$.
		
		Similar to Claim~0, we have $p_1^{(\omega)}|_{(d_i)}=p_2^{(\omega)}|_{(d_i)}$, and similar to Claim~1, as the Morley sequence $(d_i)$ is convergent, we have $p_1=p_2$.  
		This is a contradiction, as $p_1\vdash\psi(x,b)$ and $p_1\vdash\neg\psi(x,b)$.
	 (In fact, it is not necessary to assume that $M$ is a model, and we can assume that $M=(c_i)$. However, countability remains a key assumption.)
\end{Remark}
Although with Remark~\ref{A=any}(iv),   we don't need Fact~\ref{Rosenthal lemma}, but for better intuition and providing basic concepts in the rest of the article, the approach of Fact~\ref{Rosenthal lemma} is useful (cf. Definition~\ref{eventual NIP}  below).

\begin{Corollary}[\cite{Gannon sequential},  Theorem~4.8.] \label{G-refine}
Let $T$ be a countable theory,  $p(x)\in S(\cal U)$ and $N$ a (not necessarily countable)  model. If $p$ is generically stable over $N$, then there is a sequence $(c_i)\in N$ such that $tp(c_i/{\cal U})\to p$.
\end{Corollary}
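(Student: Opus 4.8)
The plan is to reduce to the countable case and then invoke Theorem~\ref{Morley sequence}. First I would recall what generic stability of $p$ over $N$ buys us: such a $p$ is finitely satisfiable in $N$, and the Morley sequence $(d_i)$ of $p$ over $N$ is totally indiscernible, so that for every formula $\phi(x,y)$ and every $b\in{\cal U}$ the set $\{i:\models\phi(d_i,b)\}$ is finite or cofinite, and $\phi(x,b)\in p$ exactly when it is cofinite. In particular $(d_i)$ converges and $\lim tp(d_i/{\cal U})=p$, which is precisely the convergence hypothesis that Theorem~\ref{Morley sequence} requires.

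The key reduction is to pass to a \emph{countable} elementary substructure $M\prec N$ over which $p$ remains generically stable. Since $T$ is countable, generic stability of $p$ over $N$ is witnessed by countably much data: the definition schema $\{d_p\phi:\phi\in L\}$ consists of countably many $L(N)$-formulas, and hence involves only countably many parameters from $N$, while finite satisfiability concerns only the (countably many) formulas of $L$. By the downward L\"owenheim--Skolem theorem I would choose a countable $M\prec N$ absorbing all of these parameters. A routine check then shows that $p$ is definable over $M$ and finitely satisfiable in $M$, i.e.\ generically stable over $M$; equivalently, the restriction of the Morley-sequence data to $M$ still exhibits the total indiscernibility used in the first paragraph.

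With this $M$ in hand all the hypotheses of Theorem~\ref{Morley sequence} are met: $T$ is countable, $M$ is a countable model, $p$ is finitely satisfiable in $M$, and the Morley sequence $(d_i)$ of $p$ over $M$ converges (the convergence statement of the first paragraph is unaffected by shrinking the base from $N$ to $M$, since it is tested against arbitrary $b\in{\cal U}$). Theorem~\ref{Morley sequence} then produces a sequence $(c_i)$ in $M\subseteq N$ with $tp(c_i/{\cal U})\to p$, which is exactly the conclusion sought.

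The main obstacle is the descent step: one must be certain that generic stability genuinely reflects down to a countable elementary substructure. This is where countability of $T$ is essential, as it is what guarantees that the definition schema and the relevant finite-satisfiability conditions involve only countably many parameters, so that a single countable $M$ can swallow all of them while remaining an elementary substructure. Once this is secured, the rest is a direct application of Theorem~\ref{Morley sequence}, which is precisely the mechanism the introduction advertises for recovering Gannon's theorem.
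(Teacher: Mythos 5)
Your overall architecture matches the paper's proof exactly: descend to a countable elementary substructure $M\prec N$ and then apply Theorem~\ref{Morley sequence}. The problem is the descent step, which you dismiss as ``a routine check''; this is where the paper instead cites Fact~\ref{Gannon fact} (Gannon's Fact~2.6), and that citation is not decorative. There are two distinct issues. First, your phrase ``$p$ is definable over $M$ and finitely satisfiable in $M$, i.e.\ generically stable over $M$'' asserts an equivalence that is false in an arbitrary theory: generic stability implies definability plus finite satisfiability (Fact~\ref{Gannon fact}(i)), but the converse fails in general --- the whole point of \cite{CG} and of the paper's Theorem~\ref{Thm B} is that one must add something to dfs (e.g.\ a convergent Morley sequence, or $DBSC$-convergence of a sequence from the model) to recover generic stability. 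Second, and more seriously for your argument, the finite-satisfiability half of the ``routine check'' is not routine: choosing $M$ to absorb the countably many parameters of the definition schema gives definability over $M$ (hence $M$-invariance), but finite satisfiability of $p$ in $M$ is a condition quantifying over \emph{all} $b\in{\cal U}$, and witnesses cannot be collected by a naive L\"owenheim--Skolem closing-off (each stage would have to handle continuum-many types over the previous stage). The descent of finite satisfiability to a small invariance base is precisely the nontrivial content of Fact~\ref{Gannon fact}(ii)--(iii).

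The repair is short and stays close to your plan. Once $M$ contains the definition parameters, $p$ is definable over $M$, hence $M$-invariant; by Fact~\ref{Gannon fact}(ii), generic stability over $N$ together with $M$-invariance yields generic stability over $M$ (alternatively, argue directly: any Morley sequence of $p$ over $M$ realizes the complete type $p^{(\omega)}|_M$, hence is conjugate over $M$ to a Morley sequence of $p$ over $N$ by some $\sigma\in Aut({\cal U},M)$, and since $\sigma(p)=p$ by $M$-invariance, convergence to $p$ transfers). Generic stability over $M$ then gives finite satisfiability in $M$ by Fact~\ref{Gannon fact}(i), and convergence of Morley sequences over $M$ comes for free --- indeed for this last point you could also just observe that a Morley sequence of $p$ over $N$ is already a Morley sequence of $p$ over $M$, since $p^{(\omega)}|_M\subseteq p^{(\omega)}|_N$. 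With finite satisfiability in the countable $M$ and one convergent Morley sequence over $M$ in hand, Theorem~\ref{Morley sequence} applies exactly as you say. So the route you chose is the paper's route; what is missing is the acknowledgment that the descent is a genuine fact (cited, not checked) and a correct direction of implication between dfs and generic stability.
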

\begin{proof}
As $T$ is countable, there is a countable elementary substructure $M$ of $N$ such that $p$ is generically stable  over  $M$, and so $p$ is finitely satisfiable in $M$ and every Morley sequence of $p$ is convergent. (See also Fact~\ref{Gannon fact} below.)
Then, by Theorem~\ref{Morley sequence}, there is a sequence $(c_i)\in M$ such that $tp(c_i/{\cal U})\to p$.
\end{proof}

Notice that in the proof of Theorem~\ref{Morley sequence}, for any formula $\psi$ there is a natural number $n$ such that $\theta_{n,\psi}\notin SEEM((c_i)/M)$. This is equivalent to a stronger version of convergence that was studied in \cite{K-Baire} and we will recall it in the next section.
 This implies that our result is strictly stronger than Gannon's theorem. Cf. Theorem~\ref{Thm B}, the direction (i)~$\Rightarrow$~(ii). This is also related to Question~4.15 of \cite{Gannon sequential}.

\section{Eventual $NIP$}
In this section, we want to give a characterization of convergent Morley sequences over countable models. First we introduce the following notions.
\begin{Definition} \label{eventual NIP}  {\em Let $T$ be a theory, $M$ a model of it and $\phi(x,y)$ a formula (with or without parameters).
\newline	
(i) We say that  $\phi(x,y)$ is  {\em eventually $NIP$ in $M$} if for any infinite sequence $(c_i)\in M$ there are a subsequence $(a_i)\subseteq(c_i)$, a natural number $n=n_{(a_i)}$ and  subset $E\subseteq\{1,\ldots,n\}$ such that for any $i_1<\cdots<i_n<\omega$, ${\cal U}\models\psi_\phi(a_{i_1},\ldots,a_{i_n})$  where
$$\psi_\phi(x_1,\ldots,x_n)=\neg\exists y\Big(\bigwedge_{i\in E}\phi(x_i,y)\wedge \bigwedge_{i\in\{1,\ldots,n\}\setminus  E}\neg\phi(x_i,y)\Big).$$
\newline
(ii) We say that  $M$ is  {\em  eventually  $NIP$}  if  {\bf every} formula  is    eventually $NIP$ in $M$. }
\end{Definition}
Note that (i) states that we have a
special {\bf pattern} that never exists. This intuition helps to better understand the notion and how to use it further. In the following, we will explain it better:
\begin{Remark}
	(i) The subsequence $(a_i)$ in the above is convergent for $\phi$.\footnote{An analysis of   $\psi_\phi$ shows that the alternation number $n_{(a_i)}$ of $(\phi(a_i,y):i<\omega)$ is finite. (See also Remark~\ref{explain DBSC}(ii) below.) Notice that $n_{(a_i)}$ depends on both the formula  and the sequence; not just on formula. This is a  `wider’
		notion of alternation number.  Cf. \cite{K-Baire}, the paragraph before Remark~2.11.} That is, $(\phi(a_i,b):i<\omega)$ converges for any $b\in\cal U$.  Moreover, $\psi_\phi(\bar x)$ is in  
	the  Ehrenfeucht-Mostovski type $EM((a_i))$ of $(a_i)$.
	\newline
	(ii) In some sense, the notion  of eventual  $NIP$ is not new. In fact, a theory $T$ is $NIP$ (in the usual sense) iff the  monster model of $T$ is eventually $NIP$ iff every model of $T$  is  eventually $NIP$ iff  some model $M$ of $T$ in which all types over the	empty set in countably many variables are realized  is   eventually $NIP$. Cf. Proposition~2.14 in \cite{K-Baire}.
	\newline
	(iii) The notion of  `eventual $NIP$' is strictly stronger than the notion of `$NIP$ in a model' in \cite{KP}, and strictly weaker than 	
	the notion of `uniform $NIP$ in a model' in \cite{K-Dependent}.\footnote{The directions are consequences of definitions. Although, we strongly believe that the strictness holds, but we have not found clear examples yet. Cf. Remark~\ref{Q1} and item (1) in the ``Concluding remarks/questions" below.}
\end{Remark}

 Let $X$ be a topological space and $f:X\to[0,1]$ be a function. Recall from \cite{K-Baire}  that $f$ is called
 a difference of bounded semi-continuous functions (short $DBSC$)
 if there exist bounded semi-continuous functions $F_1$ and $F_2$ on $X$ with $f=F_1-F_2$.  It is a well-known fact that, in general, the class of $DBSC$ functions is a proper subclass of all Baire~1 function. (Cf. \cite{K-Baire}, Section~2.)
 
 We let $\phi^*(y, x)= \phi(x, y)$. 
Let $q=tp_{\phi^*}(b/M)$ be the function $\phi^*(q,x):M\to\{0,1\}$ defined by $a\mapsto\phi^*(b,a)$. This function is called a complete $\phi^*$-types over $M$. The set of all complete $\phi^*$-types over $M$ is denoted by $S_{\phi^*}(M)$. We equip $S_{\phi^*}(M)$ with the least topology in which all functions $q\mapsto\phi^*(q,a)$ (for $a\in M$) are continuous. It is compact and Hausdorff, and is  totally disconnected.   
 
 \begin{Definition} \label{DBSC definable}
 	{\em Let $p(x)$ be a global type which is  finitely satisfiable in $M$.
 	
 	\medskip\noindent (1) Suppose that  $\phi(x,y)$ is a formula, and $p_\phi$ is the restriction of $p$ to $\phi$-formulas. Define $f_p^\phi:S_{\phi^*}(M)\to\{0,1\}$    by $f_p^\phi(q)=1$ iff $\phi(x,b)\in p$ for some/any $b\models q$. We say that:
 	
 	(i) $p_\phi$ is {\em definable over $M$} if $f_p^\phi$ is continuous,
 	
 	(ii) $p_\phi$ is {\em $DBSC$ definable over $M$} if $f_p^\phi$ is  $DBSC$,
 	
 	(iii) $p_\phi$ is {\em Baire~1 definable over $M$} if $f_p^\phi$ is Baire~1.
 	
 	\medskip\noindent	(2) $p(x)$  is called ($DBSC$ or {\em Baire~1}) {\em definable over $M$} iff for any formula $\phi(x,y)$ the type $p_\phi$ is ($DBSC$ or Baire~1) definable over $M$, respectively. }
 \end{Definition}
Notice that (i)~$\Rightarrow$~(ii)~$\Rightarrow$~(iii) but in general (i)~$\nLeftarrow$~(ii)~$\nLeftarrow$~(iii). (Cf. \cite{K-Baire}.)

\begin{Definition}  \label{DBSC converge}
{\em	Let $(a_i)\in\cal U$ be a sequence. We say that   $(a_i)$ is {\em $DBSC$-convergent} (or  {\em $DBSC$-converges}) if for any formula $\phi(x,y)$ there is a natural number $N=N_\phi$ such that for any $b\in\cal U$, $$\sum_{i=1}^\infty|\phi(a_i,b)-\phi(a_{i+1},b)|\leq N.$$  }
\end{Definition}

In the following we explain the above notions and their relationship.
 \begin{Remark} \label{explain DBSC}
 	(i) Notice that this notion is equivalent to having finite alternation number. Although, this number depends on both the
 	formula and the sequence; not just on formulas.  
 	\newline
 	(ii) Assuming that $(a_i)$ is eventually indiscernible over the empty set  $\emptyset$. A sequence $(a_i)$ is $DBSC$-convergent iff for any formula $\phi(x,y)$ (without parameters)  there is a formula $\psi_\phi(x_1,\ldots,x_n)$, as be in  Definition~\ref{eventual NIP}, such that $\models\psi_\phi(a_{i_1},\ldots,a_{i_n})$ for any $i_1<\cdots<i_n<\omega$  iff for any formula $\phi(x,y)$ (without parameters) there is a natural number $n$ such that $\theta_{n,\phi}\notin SEEM((a_i)/\emptyset)$.\footnote{$\theta_{n,\phi}$ was defined in the proof of Theorem~\ref{Morley sequence}.}
 	\newline
 	(iii) Suppose that $(a_i)$ is $DBSC$-convergent.
 	  Then the  sequence $tp(a_i/{\cal U})$ converges. Moreover, for any formula $\phi(x,y)$ the sequence $(tp_\phi(a_i/{\cal U}):i<\omega)$ converges to a type $p_\phi$ which is  $DBSC$ definable (over any model $M \supseteq(a_i)$). 
 	  \newline
 	  (iv) Whenever $M$ is countable, $DBSC$-definability and {\em strong Borel definability} (in the sense of \cite{HP}) are the same.
 \end{Remark}
 \begin{proof}
 	(i) was first observed in \cite{K-Baire}. (Cf.  the paragraph before Remark~2.11 in there.)
 	 \newline
(ii) and (iii) follows form Lemma~2.8 of \cite{K-Baire}. For the last part of (ii), note that $\models\psi_\phi(a_{i_1},\ldots,a_{i_n})$  for any $i_1<\cdots<i_n<\omega$, clearly implies that $\theta_{n,\phi}\notin SEEM((a_i)/\emptyset)$. For the converse, suppose that there are natural numbers $n, N$ such that for any $N<i_1<\cdots<i_n$,  $\theta_{n,\phi}\notin  SEEM((a_i)/\emptyset)$. Then, we replace $(a_i)$ by  $(a_{N+1},a_{N+2},\ldots)$ and use Ramsey's theorem, if necessary.
\newline (iv) was first mentioned in \cite[Remark~2.15]{K-Baire} and studied in \cite{K-GC}.
 \end{proof}

We are ready to give a characterization of convergent Morley sequences over countable models in the terms of eventual $NIP$. 
 \begin{Theorem}  \label{BFT-like}
Let $T$ be a countable theory and $M$ a  countable model  of  $T$. Then the following are equivalent:
\newline
(i) $M$ is eventually $NIP$.
\newline
(ii) For any $p(x)\in S(\cal U)$ which  is finitely  satisfiable in $M$, there is a sequence $(c_i:i<\omega)\in M$ such that the sequence $(tp(c_i/{\cal U}):i<\omega)$ $DBSC$-converges to  $p$.
\newline
(iii) For any $p(x)\in S(\cal U)$ which  is  finitely  satisfiable  in $M$, there is a Morley sequence $(d_i:i<\omega)$ of $p$ over $M$ such that $(tp(d_i/{\cal U}):i<\omega)$ converges.
 \end{Theorem}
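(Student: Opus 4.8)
The plan is to run the cycle (i)$\Rightarrow$(ii)$\Rightarrow$(iii)$\Rightarrow$(i), reducing all three conditions to statements about the symmetric formulas $\theta_{n,\phi}$ from the proof of Theorem~\ref{Morley sequence} and the identity $SEEM((c_i)/M)=\mathrm{Sym}(p^{(\omega)}|_M)$ furnished by Lemma~\ref{Key lemma}. The unifying observation is that, for an eventually indiscernible sequence, $DBSC$-convergence, bounded alternation, and ``$\theta_{n,\phi}\notin SEEM$ for some $n$'' all coincide (Remark~\ref{explain DBSC}(ii)), whereas divergence of a Morley sequence corresponds to $\theta_{n,\phi}$ lying in $SEEM$ for \emph{every} $n$ (the Claim~2 computation in the proof of Theorem~\ref{Morley sequence}). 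For (i)$\Rightarrow$(ii), given $p$ finitely satisfiable in $M$, I would first apply Lemma~\ref{Key lemma}(ii) to obtain $(c_i)\in M$ with $\lim tp(c_i/M\cup I)=p|_{M\cup I}$ (where $I=(d_i)$ is a Morley sequence of $p$), eventually indiscernible over $M\cup I$, and $SEEM((c_i)/M)=\mathrm{Sym}(p^{(\omega)}|_M)$. Since $T$ is countable, I would enumerate the formulas and repeatedly invoke eventual $NIP$ (Definition~\ref{eventual NIP}), extracting at each stage a subsequence on which the current formula has finite alternation (the content of the unlabelled remark following Definition~\ref{eventual NIP}); a diagonal subsequence $(c_i')$ then has finite alternation for every formula, i.e.\ it is $DBSC$-convergent. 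Passing to subsequences preserves the limit over $M\cup I$ and the $SEEM$-identity, so $(c_i')$ $DBSC$-converges to some type $r$ (Remark~\ref{explain DBSC}(iii)) with $r|_{M\cup I}=p|_{M\cup I}$. Finite alternation gives, for each $\phi$, some $n$ with $\theta_{n,\phi}\notin \mathrm{Sym}(p^{(\omega)}|_M)$, so the Claim~2 argument shows the Morley sequence of $p$ converges; then Claim~0 yields $p^{(\omega)}|_M=r^{(\omega)}|_M$, and Remark~\ref{A=any}(ii) forces $r=p$, so $(c_i')$ $DBSC$-converges to $p$.

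For (ii)$\Rightarrow$(iii), which is essentially the content of Remark~\ref{A=any}(iii), I would take $(c_i)\in M$ $DBSC$-converging to $p$; after passing to a subsequence via Fact~\ref{Gannon fact2} (preserving $DBSC$-convergence and the $\mathcal U$-limit) I may assume it is eventually indiscernible over $M\cup I$, so Lemma~\ref{Key lemma}(i) gives $SEEM((c_i)/M)=\mathrm{Sym}(p^{(\omega)}|_M)$. $DBSC$-convergence yields, for each $\phi$, some $n$ with $\theta_{n,\phi}\notin\mathrm{Sym}(p^{(\omega)}|_M)$, i.e.\ $\models\neg\theta_{n,\phi}(d_1,\dots,d_n)$; by the indiscernible case of Fact~\ref{Rosenthal lemma} this forces $(\phi(d_i,y):i<\omega)$ to converge, and as this holds for every $\phi$ the Morley sequence $(d_i)$ converges.

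For (iii)$\Rightarrow$(i) I would argue the contrapositive. If some $\phi$ is not eventually $NIP$ in $M$, fix $(c_i)\in M$ no subsequence of which has finite alternation for $\phi$, and set $p=\lim_{\mathcal F}tp(c_i/\mathcal U)$ along a nonprincipal ultrafilter $\mathcal F$; it is finitely satisfiable in $M$. Fixing a Morley sequence $I=(d_i)$ of $p$ over $M$, the restriction $p|_{M\cup I}$ is an accumulation point of $\{tp(c_i/M\cup I)\}$, so by second-countability of the closure I extract a subsequence converging to $p|_{M\cup I}$ and then, by Fact~\ref{Gannon fact2}, a further subsequence $(c_i')$ eventually indiscernible over $M\cup I$; the no-finite-alternation property is inherited by subsequences. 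For this eventually indiscernible sequence, the absence of finite alternation forces $\theta_{n,\phi}\in SEEM((c_i')/M)$ for every $n$ (a single eventually-forbidden pattern would, via Ramsey, yield finite alternation), whence Lemma~\ref{Key lemma}(i) gives $\models\theta_{n,\phi}(d_1,\dots,d_n)$ for all $n$; by the Claim~2 computation $(\phi(d_i,y))$ diverges, so $(d_i)$ diverges, and by Remark~\ref{A=any}(i) \emph{no} Morley sequence of $p$ converges, contradicting (iii).

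The main obstacle I anticipate is the bookkeeping in (i)$\Rightarrow$(ii) and (iii)$\Rightarrow$(i): one must pass to subsequences securing several properties at once — convergence over the countable set $M\cup I$ (via metrizability), eventual indiscernibility (Fact~\ref{Gannon fact2}), infinite alternation for $\phi$, and, in (i)$\Rightarrow$(ii), finite alternation for \emph{all} formulas simultaneously (the diagonal argument) — while verifying that each extraction preserves the properties already arranged and, crucially, that the resulting limit is the prescribed type $p$ rather than merely some type with the same restriction to $M\cup I$. The one genuinely non-formal point is checking that forbidding the single pattern $\psi_\phi$ of Definition~\ref{eventual NIP} really does bound the alternation number, so that eventual $NIP$ matches $DBSC$-convergence; this is exactly where Remark~\ref{explain DBSC} must be invoked.
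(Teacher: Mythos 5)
Your proposal is correct and takes essentially the same route as the paper's own proof: the same cycle (i)$\Rightarrow$(ii)$\Rightarrow$(iii)$\Rightarrow$(i), with each implication reduced, via Lemma~\ref{Key lemma}, Fact~\ref{Gannon fact2}, the formulas $\theta_{n,\phi}$, and Remark~\ref{explain DBSC}, to the identity $SEEM((c_i)/M)=\text{Sym}(p^{(\omega)}|_M)$. The only differences are cosmetic: you prove (iii)$\Rightarrow$(i) by contrapositive where the paper argues directly, and you make explicit some subsequence bookkeeping (e.g., passing to an eventually indiscernible subsequence before invoking Lemma~\ref{Key lemma}(i) in (ii)$\Rightarrow$(iii)) that the paper leaves implicit.
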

\begin{proof}
	(iii)~$\Rightarrow$~(i): Let $\phi(x,y)$ be a formula, and $(c_i)$ a sequence in $M$. Let $p$ be an  accumulation point  of $\{tp(c_i/{\cal U}):i\in\omega\}$. (Therefore, $p$ is finitely satisfiable in $M$.) Let $I=(d_i)$ be a Morley sequence of $p$ over $M$. By (iii), $(tp(d_i/{\cal U}):i<\omega)$ converges. 
	
		\vspace{4pt}
	\underline{Claim}:
	There is a subsequence  $(a_i)$ of $(c_i)$  such that $\lim tp(a_i/MI)=p|_{MI}$.\footnote{Compare the argument of Lemma~\ref{Key lemma}(ii) above. An alternative (and  more model-theoretic) argument can be given that is similar to the one Gannon gave in the first paragraph of the proof of \cite[Lemma 4.6]{Gannon sequential}.}  
	
	\emph{Proof}: The closure of $\{tp(c_i/MI):i<\omega\}\subset S_x(MI)$ is second-countable and compact, and so metrizable. Therefore, there is a   sequence  $(a_i)\in\{c_i:i<\omega\}$ such that $\lim tp(a_i/MI)=p|_{MI}$. We can assume that $(a_i)$ is a subsequence of $(c_i)$. (If not, consider a subsequence of $(a_i)$ which is a subsequence of $(c_i)$.)
	\hfill$\dashv_{\text{claim}}$
	
 \medskip
 By Fact~\ref{Gannon fact2}, we can assume that $(a_i)$ is eventually indiscernible. Now, by Lemma~\ref{Key lemma}, $SEEM((a_i)/M)=SEM((d_i)/M)$.
	 By Remark~\ref{explain DBSC}, as $(d_i)$ converges, this means that the condition (i) of Definition~\ref{eventual NIP} holds for $(c_i)$ and  $\phi(x,y)$. (Cf. Remark~\ref{A=any}(iv).)
	\newline
	(ii)~$\Rightarrow$~(iii):  Let  $p(x)\in S(\cal U)$ be finitely satisfied in $M$, and $(c_i:i<\omega)\in M$ such that the sequence $(tp(c_i/{\cal U}):i<\omega)$ $DBSC$-converges to  $p$.  Recall that some Morley sequence of $p$ over $M$ is convergent if and  only if {\bf every}  Morley sequence of $p$ over $M$ is convergent.
	Let $I=(d_i)$ be a Morley sequence in $p$ over $M$. By (ii),  $\lim tp(c_i/MI)=p|_{MI}$, and so by Lemma~\ref{Key lemma}, we have $SEEM((c_i)/M)=SEM((d_i)/M)$.
As $(tp(c_i/{\cal U}):i<\omega)$ is $DBSC$-convergent, by Remark~\ref{explain DBSC},  $(d_i)$ converges.
	\newline
	(i)~$\Rightarrow$~(ii): Let  $p(x)\in S(\cal U)$ be finitely satisfied in $M$. By Lemma~\ref{Key lemma},  there are a sequence $(c_i)$ in  $M$ and a Morley sequence $(d_i)$ of $p$ over $M$ such that $tp(c_i/M\cup(d_i))\to p|_{M\cup(d_i)}$ and $SEEM((c_i)/M)=SEM((d_i)/M)$. By~(i), as $T$ is countable, using a diagonal argument, there is a subsequence $(c_i')\subseteq(c_i)$ such that $(tp(c_i'/{\cal U}):i<\omega)$ $DBSC$-converges. Therefore, using an  argument similar to the proof of   Claim~2 in Theorem~\ref{Morley sequence} (or directly), we can see that   the Morley sequence $(d_i)$ is convergent. By an argument similar to Theorem~\ref{Morley sequence},  $(tp(c_i'/{\cal U}):i<\omega)$ converges to $p$. (Cf.  Remark~\ref{A=any}(iv).)
\end{proof}

\begin{Remark} \label{Q1}
	Let $T$ be a countable theory and $M$ a countable model of it. Suppose that any $p(x)\in S(\cal U)$ which  is finitely satisfied in $M$  is $DBSC$~definable over $M$. In this case, using the BFT theorem, it is easy to show that for any such type $p(x)$
  there is a sequence $(c_i)\in M$ such that $\lim tp(c_i/{\cal U})=p$. Notice that  there is no reason that  $(tp(c_i/{\cal U}):i<\omega)$ $DBSC$-converges to $p$. A question arises. With the above assumptions, for any $p(x)\in S(\cal U)$ which  is finitely satisfied in $M$,  is there any sequence $(c_i)\in M$ such that $(tp(c_i/{\cal U}):i<\omega)$ $DBSC$-converges to $p$? We believe that the answer is negative, although we have not found a counterexample yet.
\end{Remark}	
	
\subsection*{An application  to definable groups}
To finish this section, we give an example where the notion of eventual $NIP$  is used to deduce results about definable groups.

\begin{Lemma}  \label{group lemma} 
	 Let $G$ be a definable group. Let $p,q$ be invariant types concentrating on $G$  such that both $p_x\otimes q_y$ and $q_y\otimes p_x$ imply $x\cdot y= y\cdot x$.\footnote{Recall that a type $p(x)$ concentrates on a group $G$ if  $p\vdash x\in G$.}	If some/any Morley sequence of $p$ converges, then $a\cdot b= b\cdot a$ for any $a\models p$ and $b\models q$.
\end{Lemma}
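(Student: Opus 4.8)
The plan is to argue by contradiction, organizing everything around the symmetric \emph{commuting formula}: let $\phi(x,y)$ be the formula $x\cdot y=y\cdot x$, which is symmetric in $x,y$. First I would record the easy half. Writing $\tilde p$ for the canonical $M$-invariant extension of $p$, the hypothesis $q_y\otimes p_x\vdash\phi$ says exactly that for $b\models q$ any independent realization $a'\models \tilde p|_{{\cal U}b}$ satisfies $\phi(a',b)$; equivalently $\phi(x,b)\in\tilde p|_{{\cal U}b}$, i.e.\ $\tilde p$ concentrates on the centralizer $C_G(b)$ over $b$ (and symmetrically, $p_x\otimes q_y\vdash\phi$ gives $\tilde q|_{{\cal U}a}\vdash y\in C_G(a)$). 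Hence it suffices to prove, for each $b\models q$, that $p(x)\cup\{\neg\phi(x,b)\}$ is inconsistent, since then every $a\models p$ commutes with $b$. So assume for contradiction that there is a \emph{rogue} $a\models p$ with $\neg\phi(a,b)$.

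Next I would upgrade the convergence hypothesis into a parameter-free obstruction. By Remark~\ref{A=any}(i) every Morley sequence $I=(d_i)$ of $p$ over $M$ converges. Combining Fact~\ref{Rosenthal lemma} with saturation — any finite instance of an independence pattern realized by a witness in a larger monster is, being a formula over finitely many $d_i\in{\cal U}$, already realized inside ${\cal U}$ — shows that $\phi$ has no independence pattern on $I$; therefore $\phi(d_i,b')$ has finite alternation, hence converges, for \emph{every} $b'$, even one external to ${\cal U}$. In the language of the proof of Theorem~\ref{Morley sequence}, this says that there is an $n$ with $\theta_{n,\phi}\notin\mathrm{Sym}(p^{(\omega)}|_M)$. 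The strategy, following the template of Claim~1 and Claim~2 there, is to manufacture a configuration forcing $\theta_{n,\phi}\in\mathrm{Sym}(p^{(\omega)}|_M)$ for \emph{all} $n$, and thereby contradict convergence.

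The heart of the argument is to convert the single rogue $a$ into an $M$-indiscernible sequence $(e_i)$ with $EM$-type $p^{(\omega)}|_M$ (so, a Morley sequence of $p$ over $M$) along which $\phi(\cdot,b)$ alternates infinitely. Commuting terms are free: taking $e_i\models\tilde p|_{Me_{<i}b}$ gives $e_i\in C_G(b)$ by the first paragraph while still realizing $p|_{Me_{<i}}$. The difficulty is to interleave non-commuting terms, i.e.\ to realize $p|_{Me_{<i}}\cup\{\neg\phi(x,b)\}$ at infinitely many stages without disturbing the $EM$-type. Here I would use the group structure — $C_G(b)$ is a subgroup and its complement $R_b=G\setminus C_G(b)$ is a union of cosets, so a rogue left-translated by a centralizing element stays rogue — together with both product hypotheses, to \emph{refresh} the non-commuting witness over the accumulating base $Me_{<i}$. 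Once such an alternating $(e_i)$ is in hand, the backward direction of \cite[Lemma~2.7]{Simon}, used exactly as in Claim~1 of Theorem~\ref{Morley sequence}, yields $\models\theta_{n,\phi}(e_1,\dots,e_n)$ for all $n$, hence $\theta_{n,\phi}\in\mathrm{Sym}(p^{(\omega)}|_M)$ for all $n$, contradicting the previous paragraph.

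The step I expect to be the main obstacle is precisely this refreshing: producing, at each of infinitely many stages, a non-commuting realization of $p$ that remains Morley-independent over the growing set of already-chosen commuting terms. The naive shortcut — take the fixed rogue $a$ itself and declare it independent of $Me_{<i}$ — would require symmetry (total indiscernibility) of $p^{(\omega)}|_M$, and this is \emph{not} available: convergence of a Morley sequence is strictly weaker than generic stability and need not make $p^{(\omega)}|_M$ totally indiscernible (a single alternation at the start is compatible with convergence). Consequently the coset geometry of $C_G(b)$ and the two-sided commuting assumptions, rather than any self-symmetry of $p$, must drive the construction, and arranging them to deliver fresh rogues over arbitrary bases $Me_{<i}$ is the delicate point of the proof.
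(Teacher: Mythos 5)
Your reduction and your use of convergence (extracting an $n$ with $\theta_{n,\phi}\notin\text{Sym}(p^{(\omega)}|_M)$ and then aiming to force $\theta_{n,\phi}\in\text{Sym}(p^{(\omega)}|_M)$ for all $n$) are sound, but the step you yourself flag as the ``delicate point'' is a genuine gap, and the mechanism you suggest for it would not work. Left-translating a rogue $a$ by an element of $C_G(b)$ does keep it outside $C_G(b)$, but there is no reason the translate realizes $p$ --- types are not invariant under group translation --- much less $p|_{Me_{<i}}$ over the growing base. Worse, once the base $e_{<i}$ contains elements chosen over $b$, the fixed parameter $b$ is no longer generic with respect to that base, so the two tensor hypotheses (which constrain only \emph{mutually generic} pairs, i.e.\ pairs realizing $(p_x\otimes q_y)|_M$ or $(q_y\otimes p_x)|_M$) say nothing at all about the truth value of $\phi(e_i,b)$ at the rogue stages. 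So the plan of producing infinitely many non-commuting realizations of $p$ against a single fixed $b$ has no engine behind it, and the whole argument rests on that step.

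The paper's proof (an adaptation of \cite[Lemma~2.26]{Simon}) avoids this problem by varying \emph{both} coordinates and arguing by maximality rather than by manufacturing infinitely many rogues. One builds, for as long as possible, pairs $(a_k,b_k)$ with $a_k\models p|_{Ma_{<k}b_{<k}}$, $b_k\models q|_{Ma_{<k}b_{<k}}$ and $a_k\cdot b_k\neq b_k\cdot a_k$. For $k\neq m$ the pair $(a_k,b_m)$ realizes one of $(p\otimes q)|_M$ or $(q\otimes p)|_M$, so both hypotheses apply and yield the cross-commutation $a_k\cdot b_m=b_m\cdot a_k$. The group structure then enters not through cosets but through products of the $q$-side witnesses: for finite $I$, the element $b_I:=\prod_{k\in I}b_k$ commutes with $a_k$ if and only if $k\notin I$ (by cancellation, since $a_k$ commutes with every factor except possibly $b_k$). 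Thus every finite pattern of non-commutation is realized along the Morley sequence $(a_k)$ of $p$, and by compactness a single parameter in ${\cal U}$ makes $x\cdot y\neq y\cdot x$ alternate infinitely along it, contradicting convergence (using Remark~\ref{A=any}(i), some/any). Hence such sequences have length bounded by some $n$; taking a maximal one, every $a\models p|_{Ma_{<n}b_{<n}}$ and $b\models q|_{Ma_{<n}b_{<n}}$ must commute --- in particular any $a\models p$ and $b\models q$. This dichotomy-plus-maximality shape is precisely what lets the proof bypass the step you could not carry out: it never needs to refresh a rogue over a base that already contains $b$.
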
		
\begin{proof} The proof is an adaptation of \cite[Lemma~2.26]{Simon}.
By compactness, there is a small model $M$ such that $p,q$ are $M$-invariant and for any $(a,b)$ realizing one of $(p\otimes q)|_M$ or $(q\otimes p)|_M$ we have $a\cdot b= b\cdot a$.

We claim that there is {\bf no}  infinite sequence $(a_n b_n:n<\omega)$ such that $a_n\models p|_{Ma_{<n}b_{<n}}$, $b_n\models q|_{Ma_{<n}b_{<n}}$  and $a_n\cdot b_n\neq b_n\cdot a_n$. If not, by hypothesis  $a_n\cdot b_m= b_m\cdot a_n$ for $n\neq m$. For any $I\subset\omega$ finite, define $b_I=\prod_{n\in I}b_n$. Therefore, $a_n\cdot b_I= b_I\cdot a_n$ if and only if $n\notin I$.  This means that the sequence $(\phi(a_n,y):N<\omega)$ does not converges where $\phi(a_n,y):=a_n\cdot y\neq y\cdot a_n$. As $(a_n:n<\omega)$ is a Morley sequence of $p$, this contradicts the assumption.

Therefore, by the above claim, there is some $n$ such that any sequence with the  above construction has the length  smaller than $n$.
Let $p_0=p|_{Ma_{<n}b_{<n}}$ and $q_0=q|_{Ma_{<n}b_{<n}}$. Then $p_0(x)\wedge q_0(y)\rightarrow  x\cdot y= y\cdot x$.
\end{proof}	

\begin{Proposition}  \label{abeliean subgroup}
Let $T$ be a countable theory and $G$ a definable group. Assume that there is a countable subset $A\subset G$ such that any two elements of $A$ commute, and $A$ is eventually $NIP$. Then there is a definable abelian subgroup of $G$ containing $A$. 
\end{Proposition}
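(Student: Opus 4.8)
The plan is to realize $A$ inside the centralizer $C(A)$ and to promote this $\bigwedge$-definable group to a genuinely definable one using eventual $NIP$ in the style of the Baldwin--Saxl lemma. The key point is that the commuting formula $\phi(x,y):\ x\cdot y=y\cdot x$ is \emph{symmetric}, so eventual $NIP$ of $A$ --- which a priori only constrains sequences drawn from $A$ in the variable $x$ --- can be applied with the members of $A$ in either coordinate. Concretely, for $a\in G$ write $C(a)=\{g:\phi(g,a)\}$; this is a uniformly definable family of subgroups, and since $A$ is commuting, $A\subseteq\bigcap_{a\in A}C(a)=:C(A)$.

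First I would show that $C(A)$ is definable, i.e.\ equals $\bigcap_{a\in A_0}C(a)$ for some finite $A_0\subseteq A$. Suppose not. Then, by saturation, there is an infinite sequence $(a_i)_{i<\omega}$ in $A$ together with witnesses $g_i\in\bigcap_{j\neq i}C(a_j)\setminus C(a_i)$. Using only the subgroup property of the $C(a_k)$, the products $g_E=\prod_{i\in E}g_i$ satisfy $g_E\in C(a_k)\iff k\notin E$; by symmetry of $\phi$ this says that the sequence $(a_i)\subseteq A$ is shattered in its $x$-coordinate by the family $\{g_E\}$ in the $y$-coordinate, so that \emph{every} pattern $\exists y\big(\bigwedge_{i\in E}\phi(a_i,y)\wedge\bigwedge_{i\notin E}\neg\phi(a_i,y)\big)$ is realized. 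This contradicts the eventual $NIP$ of $A$ applied to $\phi$ (Definition~\ref{eventual NIP}), which hands us a subsequence, an $n$, and a single pattern $E_0$ that is \emph{never} realized on increasing $n$-tuples. Hence $C(A)$ is definable and contains $A$.

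Since every element of $A$ commutes with all of $C(A)$, one has $A\subseteq Z(C(A))$; and as $C(C(A))\subseteq C(A)$, the center is simply $Z(C(A))=C(C(A))$, an abelian subgroup. The final and hardest step is to see that this center is definable. In the genuinely $NIP$ setting one would just invoke Baldwin--Saxl a second time for the family $\{C(h):h\in C(A)\}$, but here the sequence witnessing a failure of definability lives in $C(A)$ rather than in $A$ --- indeed no member of $A$ can serve as an irredundant parameter, precisely because $A$ is central in $C(A)$ --- so eventual $NIP$ of $A$ does not apply verbatim. This is where I expect the real work to be. The plan is to circumvent it with the convergent-Morley-sequence machinery rather than a second centralizer: take a global type $p$ finitely satisfiable in $A$ (an accumulation point of $(tp(a_i/\mathcal U))$); finite satisfiability together with commutativity of $A$ forces $p\otimes p\vdash x\cdot y=y\cdot x$, and eventual $NIP$ of $A$ (via Theorem~\ref{BFT-like} and Remark~\ref{A=any}, whose arguments do not require $A$ to be a model) guarantees that the Morley sequences of $p$ converge. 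Lemma~\ref{group lemma}, applied with $q=p$, then yields a single formula $\theta\in p$ with $\theta(x)\wedge\theta(y)\to x\cdot y=y\cdot x$, that is, a \emph{definable} commuting neighborhood. I would finish by combining this definable commuting set with the definable group $C(A)$ to exhibit a definable abelian subgroup containing $A$ --- for instance by transferring eventual $NIP$ from $A$ to $\theta(\mathcal U)\cap C(A)$ so that the second Baldwin--Saxl step can be run after all, or by passing to the stabilizer of $p$. The crux, and the step I am least certain survives the weakening from full $NIP$ to eventual $NIP$ of $A$, is exactly this promotion of $Z(C(A))$ to a definable group.
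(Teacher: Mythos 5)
There is a genuine gap, and it sits in both halves of your Baldwin--Saxl strategy. In the first step, from ``no finite subintersection $\bigcap_{a\in A_0}C(a)$ equals $C(A)$'' you do \emph{not} get, by saturation, an infinite sequence $(a_i)$ in $A$ with witnesses $g_i\in\bigcap_{j\neq i}C(a_j)\setminus C(a_i)$. Saturation can supply the $g_i$ only after you have fixed an infinite sequence in $A$ \emph{all of whose finite subsets are irredundant}; the hypothesis only yields, for each $m$, some finite irredundant family $F_m\subseteq A$ of size $m$ (via the minimal-subfamily trick). Irredundancy passes to subsets but not to unions, so the $F_m$ cannot be glued into one infinite irredundant sequence, and the concatenated sequence does not contradict eventual $NIP$ either: a subsequence meeting each $F_m$ at most once carries none of the realized patterns, so Definition~\ref{eventual NIP} is never violated. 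This is exactly the cost of weakening $NIP$ (where unboundedly large finite shattered subsets of $A$ already give a contradiction) to eventual $NIP$, which only constrains infinite sequences from $A$ and their subsequences. In the second step you concede the problem yourself: the promotion of $Z(C(A))$ to a definable group is left as a list of untried alternatives, so the proof does not close.

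The missing idea is that no Baldwin--Saxl argument --- indeed no definability of $C(A)$ --- is needed at all, because \emph{the centralizer of a definable set is definable outright}: for a definable $D$, $C_G(D)$ is cut out by the single formula $\forall x\,\big(x\in D\to g\cdot x=x\cdot g\big)$. The paper's proof runs through exactly the ingredients of your third paragraph, but pushes them one compactness step further. Let $S_A\subseteq S(\mathcal U)$ be the set of global $1$-types finitely satisfiable in $A$; it is closed, hence compact. As you note, eventual $NIP$ of $A$ gives, via Theorem~\ref{BFT-like} (whose argument does not require $A$ to be a model), that every $p\in S_A$ has convergent Morley sequences, and finite satisfiability in the commuting set $A$ gives that \emph{every pair} $p,q\in S_A$ satisfies the hypotheses of Lemma~\ref{group lemma}. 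Applying Lemma~\ref{group lemma} to all pairs and using compactness of $S_A$ (as in Simon's Proposition~2.27), one obtains single formulas $\phi(x)$, $\psi(y)$ with $\phi(x)\wedge\psi(y)\to x\cdot y=y\cdot x$ such that every type in $S_A$ concentrates on both; since the realized types $tp(a/\mathcal U)$, $a\in A$, lie in $S_A$, the definable set $D=(\phi\wedge\psi)(\mathcal U)$ is a commuting set containing $A$. Then $H:=C_G(C_G(D))$ is definable, contains $D\supseteq A$, and is abelian, because $D\subseteq C_G(D)$ forces $H\subseteq C_G(D)$, so any two elements of $H$ commute. Your ``definable commuting neighborhood'' $\theta$ was one step short in two ways: it came from a single type $p$, so nothing guarantees $A\subseteq\theta(\mathcal U)$ (this is what the compactness over all of $S_A$, realized types included, fixes), and the ``hardest step'' you flag simply evaporates, since $C_G(D)$ never needs to be a finite subintersection.
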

\begin{proof}
Let $S_A\subset S({\cal U})$ be the set of global $1$-types finitely satisfible in $A$. Notice that, as $A$ is eventually $NIP$, by Theorem~\ref{BFT-like}, the Morley sequence of any type in $S_A$ is convergent. Therefore, for any $p,q\in S_A$, the pair $(p,q)$ satisfies the hypothesis of Lemma~\ref{group lemma} above. The rest is similar to the  argument of Proposition~2.27 of \cite{Simon}. Indeed, by Lemma~\ref{group lemma} and compactness, one can find formulas $\phi(x)$ and $\psi(y)$ such that $\phi(x)\wedge\psi(y)\rightarrow x\cdot y=y\cdot x$  and all types of $S_A$ concentrate on both $\phi(x)$ and $\psi(y)$. Set $H:=C_G(C_G(\phi\wedge\psi))$, where $C_G(X)=\{g\in G:g\cdot x=x\cdot g \text{ for all } x\in X\}$. Then $H$ is a definable abelian subgroup of $G$ containing $A$. 
\end{proof}

\begin{Remark}
(1) Notice that, if any two elements of a set $A$ commute, then $C_G(C_G(A))$ is abelian, but $C_G(A)$ is not automatically abelian  (even when $A$ is a subgroup).\footnote{Let $G$
	be any non-abelian group, and let $e$
	be the identity of the group. Then $C_G(e)=G$ is non-abelian.} In the following, we provide a proof:
\newline\noindent
Notice that, as any two elements of $A$ commute, $A\subseteq  C_G(A)$. Therefore $C_G(A)\supseteq  C_G(C_G(A))$.\footnote{Recall that for any $X\subseteq Y$, $C_G(Y)\supseteq C_G(X)$.} Let $a,b\in C_G(C_G(A))$. Since $b\in C_G(A)$, so   by definition $ab=ba$. As $a,b$ are arbitrary, $C_G(C_G(X))$  is abelian.\footnote{This short statement was suggested to us by Narges Hosseinzadeh.}
\newline
(2) In Proposition~\ref{abeliean subgroup}, if $A$ is finite, we don't need eventual $NIP$: take $H=C_G(C_G(A))$.
\end{Remark}

\section{Generically stable types}	
Here we want to give  new characterizations of generically stable types for countable theories. The notion of generically stable types in general theories was introduced in \cite{Pillay-Tanovic}. Recall from \cite[Prop~3.2]{CG} that a global type $p$ is {\em generically stable} over a small set $A$ if $p$ is $A$-invariant and for {\bf any} Morley sequence $(a_i:i<\omega)$ of $p$ over $A$, we have $\lim tp(a_i/{\cal U})=p$.

\medskip
Before giving the results let us recall that:
\begin{Fact}[\cite{Gannon sequential}, Fact~2.6] \label{Gannon fact}
 Let $M$ be small set,  and $p(x)\in S({\cal U})$ a global $M$-invariant type.
 \newline (i) If $p$ is generically stable over $M$, then $p$ is definable over and finitely satisfiable in $M$.
  \newline (ii)  If $p$ is generically stable over $M$ and $M_0$-invariant, then $p$ is  generically stable over $M_0$. If $p$ is   definable over and finitely satisfiable in $M$ and $M_0$-invariant, the same holds. 
  \newline (iii) Assuming that $T$ is countable, if $p$ is generically stable over $M$, there exists a {\bf countable} elementary substructure $M_0$ such that $p$ is generically stable over $M_0$. The same holds for definable   and finitely satisfiable case.
\end{Fact}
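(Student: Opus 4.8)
The plan is to treat the three clauses in the order (i), (ii), (iii), letting (ii) do the base-change work that (iii) needs. For (i) I would extract both definability and finite satisfiability from the single hypothesis that every Morley sequence $(a_i)$ of $p$ over $M$ converges to $p$. The engine is the symmetry theory of generically stable types: first I would show that convergence forces $p_x\otimes p_y = p_y\otimes p_x$ and that a Morley sequence is in fact an \emph{indiscernible set} (not merely a sequence), since a failure of total indiscernibility would, via the indiscernibility of the sequence, manufacture an alternating pattern contradicting convergence of some Morley sequence. Granting this, definability is read off the convergence $\phi(x,b)\in p \iff {\cal U}\models\phi(a_i,b)$ for cofinitely many $i$: total indiscernibility together with the finite alternation of $\phi$ along $(a_i)$ lets me rewrite this as ``$\phi(a_i,b)$ holds for the majority of $i\le N$'', a formula with parameters $a_0,\dots,a_N$; and since $p$ is $M$-invariant, the resulting $\phi$-definition is $\mathrm{Aut}({\cal U}/M)$-invariant, hence equivalent to a formula over $M$. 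For finite satisfiability (here taking $M$ to be a model) I would use the commutativity of the Morley product to flip $p$ against a global coheir of $p|_M$: the two types share a common Morley sequence, and symmetry then forces each $\phi(x,b)\in p$ to be realized already inside $M$. All of this is the content of the equivalences in \cite{Pillay-Tanovic} and \cite{CG}.

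For (ii) the argument is soft. Any Morley sequence of $p$ over the larger base $M$ is automatically a Morley sequence over $M_0$, because $p|_{M b_{<i}}\vdash p|_{M_0 b_{<i}}$; conversely any two Morley sequences of $p$ over $M_0$ realize the same type $p^{(\omega)}|_{M_0}$ and are therefore conjugate by some $\sigma\in\mathrm{Aut}({\cal U}/M_0)$. Since $p$ is $M_0$-invariant, $\sigma$ fixes $p$, so convergence of one Morley sequence over $M_0$ (which we inherit from an $M$-Morley sequence) transfers to all of them; hence $p$ is generically stable over $M_0$. For the second clause, $M_0$-invariance makes each $\phi$-definition---a priori over $M$---an $\mathrm{Aut}({\cal U}/M_0)$-invariant definable set, hence defined over $M_0$, and finite satisfiability descends from $M$ to $M_0$ using $M_0$-invariance together with this definability over $M_0$.

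For (iii), countability of $T$ is used exactly once: there are only countably many formulas $\phi(x,y)$, so by (i) the $\phi$-definitions of $p$ involve only countably many parameters from $M$. I would run a Löwenheim--Skolem construction to build a countable elementary substructure $M_0\prec M$ containing all of them; then $p$ is definable over $M_0$ and hence $M_0$-invariant. Part (ii) now applies verbatim: $p$ is generically stable over $M$ and $M_0$-invariant, so it is generically stable over $M_0$, and the ``definable and finitely satisfiable'' variant follows from the corresponding clause of (ii). The main obstacle is clause (i), and within it the passage from convergence to finite satisfiability and to a $\phi$-definition \emph{over $M$} rather than over ${\cal U}$: this is the only place requiring the genuine symmetry/total-indiscernibility theory of generically stable types, everything else being invariance bookkeeping and Löwenheim--Skolem.
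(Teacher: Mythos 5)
The paper itself gives no proof of this statement: it is imported as a Fact, with a citation to Gannon's paper (which in turn attributes the content to Pillay--Tanovi\'c and Conant--Gannon). So your proposal has to be judged on its own merits, and much of its architecture is indeed the standard one: the majority-vote $\phi$-definition plus $\mathrm{Aut}({\cal U}/M)$-invariance for definability, the coheir flip plus a shared Morley sequence for finite satisfiability, the conjugacy argument for (ii) (which is correct as stated when $M_0\subseteq M$, and that is all that (iii) needs), and L\"owenheim--Skolem for (iii).

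There is, however, a genuine gap at exactly the step you call the engine. You claim that a failure of total indiscernibility lets you ``manufacture an alternating pattern contradicting convergence of some Morley sequence.'' This is false as a mechanism. Counterexample: in DLO let $p$ be the global type at $+\infty$ and $M$ any small model. Then $p$ is $M$-invariant, no Morley sequence of $p$ over $M$ is totally indiscernible (they are strictly increasing), and yet \emph{every} $\omega$-indexed Morley sequence of $p$ over $M$ converges: for each $b\in{\cal U}$ the set $\{i : a_i<b\}$ is an initial segment of $\omega$, hence finite or cofinite. The reason no alternation can be manufactured is structural: a failure of symmetry only ever produces an order pattern, and against any fixed parameter $d$ the elements of a Morley sequence chosen after $d$ all realize $p$ over $d$ and hence agree on $\phi(x,d)$; one can therefore force at most one truth-value switch, never infinitely many. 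Any correct argument must use that the limit is $p$ \emph{itself} (which is exactly what fails in the DLO example), and the known proofs do so in an essentially different way: from the order pattern one extracts, by taking limits of a Morley sequence, that $\phi(d,x)\in p$, and then a further realization $d'\models p|_{Md}$ satisfies $\phi(d,d')$ while the pair $(d,d')$, being an increasing pair of a Morley sequence, must satisfy $\neg\phi$; equivalently, two cofinal segments of one longer Morley sequence are forced to have the same limit $p$ but visibly disagree. Since your definability step (which needs total indiscernibility plus a uniform alternation bound to get the majority formula) and your finite-satisfiability step (which needs $r^{(\omega)}|_M=p^{(\omega)}|_M$ for the coheir $r$, again resting on commutativity) both sit on top of this step, the gap propagates through all of part (i), even though every statement you assert is a true theorem of Pillay--Tanovi\'c/Conant--Gannon.

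A smaller gap: in (ii) you assert that finite satisfiability ``descends from $M$ to $M_0$'' from invariance and definability over $M_0$, with no argument. This is true but not bookkeeping: one needs, for each $\phi$, that by saturation the definable set $d_p\phi({\cal U})$ is covered by finitely many sets $\phi(m_i,{\cal U})$ with $m_i\in M$ (this uses finite satisfiability in $M$), so that ${\cal U}\models\exists x_1\ldots x_k\,\forall y\,\big(d_p\phi(y)\to\bigvee_{i\leq k}\phi(x_i,y)\big)$, and then elementarity of $M_0$ (this is where $M_0$ being a model, not just a set, is used) moves the witnesses into $M_0$. Note also that your (ii) tacitly assumes $M_0\subseteq M$; the general statement reduces to this case by first passing from $M$ to $M\cup M_0$.
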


\begin{Lemma} \label{generical stable}
	Let $T$ be a  (countable or uncountable) theory, $A\subset {\cal U}$,  and $p(x)\in S({\cal U})$ a global $A$-invariant type. Suppose that some/any Morley sequence of $p$ is totally indiscernible, AND some/any Morley sequence of $p$ is convergent. Then $p$ is generically stable.\footnote{This was first announced in Remark~3.3(iii) of \cite{HP}.}
\end{Lemma}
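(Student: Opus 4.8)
The plan is to show that any Morley sequence of $p$ satisfies the exchangeability/convergence conditions forcing genericity, by reducing to a countable situation and combining the two hypotheses. Recall the definition to be proved: a global $A$-invariant type $p$ is generically stable over $A$ iff it is $A$-invariant and for \emph{any} Morley sequence $(a_i)$ of $p$ over $A$ we have $\lim tp(a_i/\mathcal U)=p$. So I need to upgrade ``some/any Morley sequence is convergent'' to ``every Morley sequence converges precisely to $p$,'' and use total indiscernibility to pin down the limit.

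First I would fix a Morley sequence $I=(a_i)$ of $p$ over $A$. By convergence the sequence $(tp(a_i/\mathcal U):i<\omega)$ has a well-defined limit, say $q\in S(\mathcal U)$; note $q$ is finitely satisfiable in (hence invariant over) the set $A\cup I$ and, being the limit along a Morley sequence, is a plausible candidate to equal $p$. The key leverage is total indiscernibility: since $(a_i)$ is totally indiscernible, the sequence is unchanged (up to $A$-automorphism) under permuting indices, so permuting finitely many of the $a_i$ and then taking the limit gives the same $q$. This symmetry is exactly what the $SEEM$/$SEM$-machinery of Lemma~\ref{Key lemma} is built to exploit, so I would phrase the argument in terms of symmetric formulas: for any symmetric $L(A)$-formula $\phi$, membership of $\phi$ in $SEM(I/A)=\mathrm{Sym}(p^{(\omega)}|_A)$ records exactly the eventual truth value along the sequence, and total indiscernibility extends the symmetric behaviour to all coordinate orders.

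Next I would show $q=p$. The natural tool is Remark~\ref{A=any}(ii): if $p$ and $q$ are both $A$-invariant with $p^{(\omega)}|_A=q^{(\omega)}|_A$ and the Morley sequence of $p$ is convergent, then $p=q$. So it suffices to verify $p^{(\omega)}|_A=q^{(\omega)}|_A$. Here total indiscernibility does the work: because $I$ is totally indiscernible, the limit type $q$ agrees with $p$ not merely on $1$-types but on all Morley powers — permuting coordinates does not change which formulas hold eventually along $I$, forcing the Morley type of $q$ to coincide with that of $p$ over $A$. Concretely, I would run an induction (mirroring Claim~0 of Theorem~\ref{Morley sequence}) showing that $p^{(n)}|_A=q^{(n)}|_A$ for all $n$, where the inductive step uses convergence to compute $q$ on a fresh coordinate and total indiscernibility/symmetry to match it against $p$. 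Once $p^{(\omega)}|_A=q^{(\omega)}|_A$ is established, Remark~\ref{A=any}(ii) yields $p=q$, i.e.\ $\lim tp(a_i/\mathcal U)=p$.

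Finally, since the chosen Morley sequence $I$ was arbitrary, this gives $\lim tp(a_i/\mathcal U)=p$ for \emph{every} Morley sequence, which is precisely generic stability of $p$ over $A$. (If needed, the reduction to a countable setting — replacing $A$ and $T$ by countable fragments so that Lemma~\ref{Key lemma} and the metrizability arguments apply — can be carried out first, as in Corollary~\ref{G-refine}, but by Remark~\ref{A=any}(ii) the equality $p=q$ holds for arbitrary $T$ once the Morley powers agree.) The main obstacle I anticipate is the inductive matching of the Morley powers: convergence only directly controls the behaviour of a \emph{single} fresh variable added to a fixed finite tuple, so the delicate point is leveraging total indiscernibility to transfer this control symmetrically across all coordinates and conclude $p^{(n)}|_A=q^{(n)}|_A$ at every stage, exactly the place where, without total indiscernibility, the limit type $q$ could fail to be symmetric and hence fail to equal $p$.
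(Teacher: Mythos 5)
Your overall skeleton (set $q=\lim tp(a_i/{\cal U})$, prove that the Morley powers of $p$ and $q$ agree over a small base, then invoke Remark~\ref{A=any}(ii)) is a sensible idea, but the step you yourself flag as ``the main obstacle'' is a genuine gap, and it cannot be closed by an induction ``mirroring Claim~0'' of Theorem~\ref{Morley sequence}. Claim~0 works because of two features that fail in your setting: there both types are invariant over the \emph{same} base $M$ over which the induction runs, and the test realizations of the Morley powers (the tuples $\bar d$ taken from the Morley sequence $(d_i)$) lie \emph{inside} the region $M\cup(d_i)$ where the two types are already known to agree. In your situation $q$ is only finitely satisfiable in, hence invariant over, $A\cup I$ --- as you note --- and its $A$-invariance is not known; indeed $A$-invariance of $q$ says exactly that all $A$-conjugate Morley sequences of $p$ have the same limit, which is essentially the statement being proved. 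So your induction must run over the base $AI$; but then a realization $\bar d'\models p^{(n)}|_{AI}$ lies \emph{outside} $AI$, where the base case $p|_{AI}=q|_{AI}$ gives no information, and the inductive step needs precisely the assertion: if $\phi(x,\bar d')\in p$ then $\phi(a_i,\bar d')$ holds for all large $i$. For parameters outside $AI$ this is the full content of the lemma; convergence of $I$ by itself says nothing about it, and ``permuting coordinates does not change which formulas hold eventually along $I$'' is not an argument that produces it.

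What closes this gap --- and is in fact the paper's entire proof, with no limit type $q$, no Morley-power induction, and no appeal to Remark~\ref{A=any}(ii) --- is a direct, formula-by-formula argument whose key move you never make: introducing an \emph{auxiliary} Morley sequence over a base that includes the offending parameter. Fix any $\phi(x,b)\in p$ with $b\in{\cal U}$ arbitrary and let $J\models p^{(\omega)}|_{AIb}$; every element of $J$ satisfies $\phi(x,b)$. If infinitely many $a_i$ satisfied $\neg\phi(x,b)$, then, since each concatenation $I_n+J$ is again a Morley sequence of $p$ over $A$ and hence totally indiscernible, any pattern of $\phi/\neg\phi$ can be transferred onto tuples from $J$: choose witnesses for $\neg\phi(\cdot,b)$ in $I_n$ and for $\phi(\cdot,b)$ in $J$, and use total indiscernibility to move them (with $b$ becoming an existential witness) onto $(j_1,\ldots,j_k)$. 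Thus the symmetric formulas $\theta_{k,\phi}$ all belong to $tp(J)$, so by compactness $J$ is not convergent --- contradicting the hypothesis, since $J$ is itself a Morley sequence of $p$. Hence all but finitely many $a_i$ satisfy $\phi(x,b)$, i.e.\ $\lim tp(a_i/{\cal U})=p$ directly, for the arbitrary Morley sequence $I$. It is exactly this interaction between total indiscernibility and convergence, mediated by the fresh sequence $J$, that your proposal is missing; without it the induction you describe has no engine.
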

\begin{proof}
	Let $I=(a_i)$ be a Morley sequence of $p$ over $A$. We show that $\lim tp(a_i/{\cal U})=p$. Let $\phi(x,b)\in p$ and $J\models p^{(\omega)}|_{AIb}$. Set $I_n=(a_1,\ldots,a_n)$ for all $n$. Notice that all points of $J$ satisfy $\phi(x,b)$, and $I_n+J$ is a Morley sequence (for all $n$).\footnote{$I_n+J$ is the concatenation of $I_n$ and $J$. It has $I_n$ as initial segment and $J$ as the complementary final segment.} We claim that at most a finite number of points of $I$ satisfy $\neg\phi(x,b)$. If not, for each $k$, there is a natural number $n_k$ such that $\#\{a_i\in I_{n_k}:\models\neg\phi(a_i,b)\}\geq k$. As $I_n+J$ is totally indiscernible (for all $n$), this implies that for each $n$,
	 $\theta_{n,\phi}(x_1,\ldots,x_n)\in tp(J)$ where	
	 $$\theta_{n,\phi}(x_1,\ldots,x_n)=\forall F\subseteq \{1,\ldots,n\}\exists y_F\Big(\bigwedge_{i\in F}\phi(x_i,y_F)\wedge \bigwedge_{i\notin F}\neg\phi(x_i,y_F)\Big).$$ (Recall that $\theta_{n,\phi}$ was introduced in the proof of Theorem~\ref{Morley sequence}. Notice that if $\#\{i: \models\phi(a_i,b)\}=\aleph_0$ then we do not need total  indiscernibility, but only indiscernibility.)	
	 Equivalently, $J$ is not convergent, a contradiction.
\end{proof}	

\begin{Remark} \label{one sequence}
Let $T$ be a (countable or uncountable) theory, $A\subset {\cal U}$,  and $p(x)\in S({\cal U})$ a global $A$-invariant type. The following are equivalent.
\newline
(i) $p$ is generically stable.
\newline
(ii) $p$ is definable over a small model AND there is a Morley sequence $(a_i:i<\omega)$ of $p$ over $A$ such that $\lim tp(a_i/{\cal U})=p$.
\end{Remark}
\begin{proof}
(i)~$\Longrightarrow$~(ii) follows from Fact~\ref{Gannon fact}. (Cf. \cite{Simon}, Theorem~2.29.)

(ii)~$\Longrightarrow$~(i): Suppose that there is a Morley sequence $I=(a_i)$ of $p$ over $A$ such that $\lim I=p$. As $p$ is definable and finitely satisfiable,  some/any Morley sequence of $p$ is totally indiscernible. (Cf. \cite[Corollary~4.11]{K-Dependent} for a proof that any definable and finitely satisfiable type commutes with itself and a generalization  to measures.) Therefore, by Lemma~\ref{generical stable}, $p$ is generically stable. 
\end{proof}

The following theorem gives new characterizations of generically stable types for countable theories. The important ones to note immediately are (ii) and (v).
\begin{Theorem}  \label{Thm B}
	Let $T$ be a countable theory, $M$ a small model of $T$, and $p(x)\in S({\cal U})$ a global $M$-invariant type. The following are equivalent:
	\newline
	(i) $p$ is generically stable over $M$.
	\newline
	(ii) $p$ is definable over a small model, AND there is a sequence $(c_i)$ in $M$ such that $(tp(c_i/{\cal U}):i<\omega)$ $DBSC$-converges to $p$.
	\newline
	(iii) $p$ is definable over  and finitely satisfiable in some small model, AND  there is a convergent Morley sequence of $p$ over $M$.
	\newline
	(iv)  $p$ is definable over a small model, AND there is a Morley sequence $(a_i)$ of $p$ over $M$ such that $\lim tp(a_i/{\cal U})=p$.
	\newline
	 Suppose moreover that $T$ has $NSOP$, then  each of (v), (vi), (vii)  below
	is also equivalent to (i), (ii), (iii), (iv) above:
	\newline
	(v)  There is a sequence $(c_i)$ in $M$ such that $(tp(c_i/{\cal U}):i<\omega)$ $DBSC$-converges to $p$.
	\newline
	(vi)    $p$ is finitely satisfiable in a countable model $M_0\prec M$, AND there is a convergent Morley sequence of $p$ over $M$.
	\newline
	(vii) There is a Morley sequence $(a_i)$ of $p$ over $M$ such that $\lim tp(a_i/{\cal U})=p$.
\end{Theorem}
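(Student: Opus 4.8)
The plan is to run the equivalences as two linked blocks. I would first settle (i)--(iv) with no extra hypothesis and then bolt (v)--(vii) onto this block under $NSOP$. The cheapest link is (i)$\Leftrightarrow$(iv), which is literally Remark~\ref{one sequence}. For (i)$\Rightarrow$(iii) I would invoke Fact~\ref{Gannon fact}: generic stability already delivers definability and finite satisfiability in a small (indeed countable) model, while the defining convergence of the Morley sequence supplies the convergent Morley sequence. Conversely (iii)$\Rightarrow$(i) combines the cited fact that a definable, finitely satisfiable type has a \emph{totally} indiscernible Morley sequence (\cite[Cor.~4.11]{K-Dependent}) with Lemma~\ref{generical stable}, whose hypothesis ``totally indiscernible plus convergent'' is then exactly met. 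So among (i)--(iv) the genuine role of generic stability is only to force \emph{definability}; the convergence content is interchangeable.

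The one new point inside (i)--(iv) is the $DBSC$ strengthening in (ii). For (i)$\Rightarrow$(ii) I would pass to a countable $M_0\prec M$ over which $p$ is still generically stable (Fact~\ref{Gannon fact}(iii)), extract from Lemma~\ref{Key lemma}(ii) an eventually indiscernible $(c_i)$ in $M_0$ with $SEEM((c_i)/M_0)=\mathrm{Sym}(p^{(\omega)}|_{M_0})$, and note that by the argument of Theorem~\ref{Morley sequence} (the observation recorded after Corollary~\ref{G-refine}) convergence of the Morley sequence forces, for each $\psi$, some $n$ with $\theta_{n,\psi}\notin SEEM((c_i)/M_0)$. By Remark~\ref{explain DBSC}(ii) this uniform alternation bound is precisely $DBSC$-convergence of $(c_i)$, and convergence to $p$ itself is as in Theorem~\ref{Morley sequence}; together with definability this is (ii). For (ii)$\Rightarrow$(iv) I would use that $DBSC$-convergence implies ordinary convergence, so $p$ is finitely satisfiable in $M$; then Remark~\ref{A=any}(iii), after descending to a countable model carrying $(c_i)$, makes every Morley sequence of $p$ converge, necessarily to $p$, which with definability yields (iv).

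For the $NSOP$ block the forward arrows (i)$\Rightarrow$(ii)$\Rightarrow$(v), (i)$\Rightarrow$(iii)$\Rightarrow$(vi), (i)$\Rightarrow$(iv)$\Rightarrow$(vii) are free, being the earlier conditions with their definability/finite-satisfiability clauses simply deleted. The content lies in the three reverse arrows, and I would funnel all of them through a single statement: \emph{under $NSOP$, a convergent Morley sequence of an invariant type is totally indiscernible}. Granting this, (vii)$\Rightarrow$(i) is immediate from Lemma~\ref{generical stable} (total indiscernibility of one, hence any, Morley sequence, plus the convergence handed by (vii)); and (v)$\Rightarrow$(i), (vi)$\Rightarrow$(i) first reduce to (vii) by producing a Morley sequence converging to $p$ via Theorem~\ref{Morley sequence} and Remark~\ref{A=any}(iii) (using $DBSC$-convergence, resp.\ countable finite satisfiability, to guarantee the Morley sequence converges), and then appeal to the same lemma. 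Thus $NSOP$ is spent entirely on recovering the definability that (v)--(vii) drop.

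The main obstacle is proving that one $NSOP$-lemma. I would argue by contradiction: if the convergent indiscernible sequence $(a_i)$ were not totally indiscernible, a standard argument yields a formula $\phi(x,y)$ with $\models\phi(a_i,a_j)\Leftrightarrow i<j$, so $\phi$ has the order property witnessed along $(a_i)$. Convergence of $(a_i)$ excludes the independence pattern along it by Fact~\ref{Rosenthal lemma}(iii), so the order property cannot be of $IP$ type; by the dichotomy $OP=IP\vee SOP$ it must instead produce a strictly increasing chain of $\phi$-definable fibres, i.e.\ $SOP$, against the hypothesis. The delicate points, which is where convergence and $NSOP$ must be combined with care, are to extract a genuine $SOP$-configuration (properly nested fibres $\phi(\mathcal{U},a_i)$) rather than a mere linear order on the points $a_i$, and to ensure that the independence pattern arising in the alternative is carried by $(a_i)$ itself, so that Fact~\ref{Rosenthal lemma} genuinely bites.
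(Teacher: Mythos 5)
Your block (i)--(iv) follows the paper's own decomposition almost verbatim: (i)$\Leftrightarrow$(iv) via Remark~\ref{one sequence}, (iii)$\Leftrightarrow$(i) via \cite[Corollary~4.11]{K-Dependent} plus Lemma~\ref{generical stable}, and (i)$\Rightarrow$(ii) via Corollary~\ref{G-refine} together with the $\theta_{n,\psi}$-observation after it and Remark~\ref{explain DBSC}(ii). One step there is stated too quickly: in (ii)$\Rightarrow$(iv) you say Remark~\ref{A=any}(iii) makes every Morley sequence of $p$ converge ``necessarily to $p$.'' A convergent Morley sequence of an invariant type need \emph{not} converge to $p$ --- that conclusion is essentially the definition of generic stability, and it fails already for coheirs in DLO, where Morley sequences converge but to a different type. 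To justify it you must first get total indiscernibility from ``definable $+$ finitely satisfiable'' and then run Lemma~\ref{generical stable}; i.e., route the arrow as (ii)$\Rightarrow$(iii)$\Rightarrow$(i)$\Rightarrow$(iv), which is how the paper argues its (ii)$\Rightarrow$(i). This is fixable with tools you already invoke, so it is a local repair, not a structural flaw.

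The $NSOP$ block is where you genuinely diverge from the paper, and where the genuine gap lies. The paper spends $NSOP$ on recovering \emph{definability}: a $DBSC$-convergent sequence cannot sit in an order pattern $\phi(c_i,b_j)\iff i<j$ under $NSOP$ (\cite[Proposition~2.10]{K-Baire}), so by the Eberlein--Grothendieck criterion (\cite[Fact~2.2]{K-Baire}) the limit type is definable, reducing (v),(vi),(vii) to (ii),(iii),(iv). You instead spend $NSOP$ on recovering \emph{total indiscernibility} of convergent Morley sequences and funnel everything through Lemma~\ref{generical stable}. That lemma of yours is in fact true, but your proposed proof of it does not work: the dichotomy $OP=IP\vee SOP$ is a statement about the \emph{theory}, and an $NSOP$ theory may perfectly well have $IP$ witnessed by other formulas and other sequences. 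Hence from ``order property along $(a_i)$, no independence pattern along $(a_i)$, $T$ has $NSOP$'' the global dichotomy yields no contradiction at all --- it is simply consistent with $T$ having $IP$ elsewhere. What is actually needed is the local, Shelah-style construction: from $\psi(a_i,a_j)\iff i<j$ (obtained from non-total-indiscernibility, with parameters from an extension of the sequence) together with the finite alternation bound that indiscernibility plus convergence provides (this is the paper's observation that an indiscernible convergent sequence is $DBSC$-convergent), one must build properly nested fibres, i.e., an $SOP$ configuration. That construction is precisely the content of \cite[Proposition~2.10]{K-Baire} --- the very tool the paper cites --- and the two ``delicate points'' you flag at the end are exactly this lemma; leaving them unproved leaves the three reverse arrows (v),(vi),(vii)$\Rightarrow$(i) unproved. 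With that citation inserted, your funneling architecture does close, and it is arguably a tidy alternative in that it bypasses definability as an intermediate step; but as written the key lemma is asserted, not proved.
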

\begin{proof}
 	(i)~$\Longrightarrow$~(ii): As $T$ is countable, by Fact~\ref{Gannon fact}, we can assume that  $p$ is generically stable over a countable substructure  $M_0 \prec M$. By Corollary~\ref{G-refine}, there is a sequence $(c_i)$ in $M$ such that $(tp(c_i/{\cal U}):i<\omega)$ converges to $p$. Notice that in the proof of  Theorem~\ref{Morley sequence} for any formula $\phi$ there is a natural number   $n$ such that the formula $\theta_{n,\phi}$ does not belong to $SEEM((c_i)/M)$. This means that $(tp(c_i/{\cal U}):i<\omega)$ is  $DBSC$-convergent.

 	(ii)~$\Longrightarrow$~(i): Clearly, $p$ is finitely satisfiable in $M$. As $p$ is definable and finitely satisfiable,  any Morley sequence of $p$ is totally indiscernible. (Cf. \cite[Corollary~4.11]{K-Dependent}.)  Let $(d_i)$ be a Morley sequence of $p$ over $M$. By Fact~\ref{Gannon fact2}, we can assume that $(c_i)$ is eventually indiscernible over $M\cup(d_i)$. By Lemma~\ref{Key lemma}, it is easy to see that $SEEM((c_i)/M)=SEM((d_i)/M)$. Therefore, as $(c_i)$ is $DBSC$-convergent, the Morley sequence $(d_i)$ converges. By Lemma~\ref{generical stable}, $p$ is generically stable.

 		(iii)~$\Longrightarrow$~(i) follows from Lemma~\ref{generical stable} and the fact that the Morley sequences of definable and finitely satisfiable types are totally indiscernible.
 		
 	(i)~$\Longrightarrow$~(iii) follows from the direction (i)~$\Longrightarrow$~(ii) of \cite[Pro.~3.2]{CG}. (Recall that generically stable types are definable and finitely satisfiable.)
 	
 	(iv)~$\iff$~(i) follows from Remark~\ref{one sequence}.
 	
 	 	 \medskip
 	The directions   (ii)~$\Longrightarrow$~(v) and (iii)~$\Longrightarrow$~(vi) and (iv)~$\Longrightarrow$~(vii)  are evident (and hold in any theory).
 	
 	 	 \medskip
 	 {\bf  For the rest of the proof, suppose moreover that $T$ has $NSOP$.}
 	 
 	  Then, (v)~$\Longrightarrow$~(ii) follows from Proposition 2.10 of \cite{K-Baire} and the Eberlein--Grothendieck criterion (\cite[Fact~2.2]{K-Baire}).
 	 Indeed, by the direction (i)~$\Longrightarrow$~(iv) of \cite[Pro.  2.10]{K-Baire}, for any formula $\phi(x,y)$,  there is no infinite sequence $(b_j)$ such that $\phi(c_i,b_j)$ holds iff $i<j$. By Fact~2.2 of \cite{K-Baire}, this means that the limit of $(\phi(c_i,y):i<\omega)$ is a continuous function. Equivalently, $p$ is definable over $M$.
 	  (See also Remark~2.11 of \cite{K-Baire}.)

 	(vi)~$\Longrightarrow$~(iii): Suppose that $p$ is finitely satisfiable in  $M_0\prec M$ with $|M_0|=\aleph_0$. By Theorem~\ref{Morley sequence}, there is a sequence $(c_i)\in M_0$ such that $(tp(c_i/{\cal U}):i<\omega)$ $DBSC$-converges to $p$. By the direction (i)~$\Longrightarrow$~(iv) of \cite[Pro.  2.10]{K-Baire} and \cite[Fact  2.2]{K-Baire}, $p$ is definable over $M_0$. Therefore, (iii) holds.
 	
 	(vii)~$\Longrightarrow$~(iv): As $(a_i)$ is {\bf indiscernible} and convergent, the sequence $(tp(a_i/{\cal U}):i<\omega)$ is  $DBSC$-convergent. This means, by $NSOP$ (i.e. the direction (i)~$\Longrightarrow$~(iv) of \cite[Pro.  2.10]{K-Baire} and \cite[Fact  2.2]{K-Baire}), that $p$ is definable.
\end{proof}	

\begin{Remark}
	(i) It is not hard to give a variant of Theorem~\ref{Thm B} for {\em uncountable} theories. Indeed, we can consider {\em all} countable fragments of the languages, and use the above argument.
	\newline 
	(ii) With the assumption of Theorem~\ref{Thm B}, then   $(*)$  below
	is also equivalent to (i)---(iv) in Theorem~\ref{Thm B}:
	
	\medskip
	$(*)$ For any $B\supset M$, $p$ is the unique global nonforking extension of $p|_B$, AND  there is a convergent Morley sequence of $p$ over $M$.
	
	\medskip\noindent
		The argument is an adaptation of the proof of   \cite[Proposition~3.2]{HP}. See also Proposition~\ref{unique extension}(ii) below.
\end{Remark}

As the referee pointed out to us, the following proposition is not new.\footnote{(i) is Remark~5.18 of \cite{CGH}, and (ii) follows from the fact that generically stable types are stationary (cf.  \cite[Proposition 1(iv)]{Pillay-Tanovic}).} Although for the sake of completeness we give a proof using the above observations.

\begin{Proposition} \label{unique extension}
	Let $T$ be a  (countable or uncountable) theory and $p$ a generically stable type.
	\newline (i) For any invariant type $q$, $p\otimes q=q\otimes p$.
	\newline (ii)  If $p$ is $A$-invariant, then $p$ is the unique $A$-invariant extension of $p|_A$.
\end{Proposition}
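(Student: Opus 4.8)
The plan is to prove the two commutation/uniqueness statements for a generically stable type $p$ by exploiting the symmetry of the tensor product that generic stability grants, using the machinery already developed. First I would establish part (i). The key fact I would invoke is that generically stable types are definable and finitely satisfiable over a small model $M$ (Fact~\ref{Gannon fact}(i)), and that any Morley sequence of such a $p$ is totally indiscernible (as used repeatedly, e.g. via \cite[Corollary~4.11]{K-Dependent}). To show $p\otimes q = q\otimes p$ for an arbitrary invariant $q$, fix a formula $\phi(x,y)$ and realizations; it suffices to check that $\phi(x,y)\in p\otimes q$ iff $\phi(x,y)\in q\otimes p$. I would unwind both tensor products along a Morley sequence $(a_i)$ of $p$: because $p$ is generically stable, $\lim tp(a_i/\mathcal U)=p$ for \emph{any} Morley sequence, and by total indiscernibility the truth value of $\phi(a_i,b)$ is eventually constant with limit governed by $p_\phi$. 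The standard argument then compares, for $b\models q$, whether $a\models p$ has $\phi(a,b)$ against the ``reversed'' order, and the convergence of the Morley sequence forces the two to agree. This is exactly the content that generic stability makes the type ``commute with everything,'' and the convergence/symmetric-formula apparatus of Theorem~\ref{Morley sequence} and Lemma~\ref{generical stable} provides the needed control.

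For part (ii), the plan is to deduce uniqueness of the $A$-invariant extension from the commutation property together with stationarity. Suppose $p'$ is another $A$-invariant global type with $p'|_A = p|_A$. I would build a Morley-type comparison: since $p$ and $p'$ agree over $A$ and both are $A$-invariant, I would show $p^{(\omega)}|_A = p'^{(\omega)}|_A$ by the same induction as in Claim~0 of Theorem~\ref{Morley sequence}, reducing the equality of the Morley types to equality of restrictions over $A$. Then, because the Morley sequence of the generically stable type $p$ is convergent, I would apply Remark~\ref{A=any}(ii): if the Morley sequence of $p$ is convergent and $p^{(\omega)}|_A = p'^{(\omega)}|_A$, then $p = p'$. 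This is the cleanest route, as it directly packages the needed uniqueness into an already-proven statement rather than re-running the alternating-sequence contradiction from Claim~1.

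The main obstacle I anticipate is part (i): commutation must be checked for a \emph{completely arbitrary} invariant type $q$, not just one finitely satisfiable in a countable model, so I cannot simply quote Theorem~\ref{Morley sequence} or the countable-model results of Section~3. The correct abstraction is that generic stability is preserved under passing to the relevant parameters and that a generically stable $p$ satisfies $\lim tp(a_i/\mathcal U)=p$ for \emph{every} Morley sequence over a base where it is invariant; this global convergence, combined with total indiscernibility of the Morley sequence, is what drives the symmetry argument and sidesteps any countability hypothesis on $q$. I would therefore phrase the comparison purely in terms of the Morley sequence of $p$ and the single parameter $b$ witnessing a potential asymmetry, reducing to the one-variable convergence statement that total indiscernibility guarantees. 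Once this is set up, both (i) and (ii) follow without further combinatorial input, which is why the footnote can note that the result is essentially known.
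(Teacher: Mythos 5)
Your plan for part (i) assembles the right ingredients but never performs the one construction on which the whole proof rests, namely how the witness $b$ is placed relative to the Morley sequence; saying that ``the convergence of the Morley sequence forces the two orders to agree'' restates the goal rather than proving it. What is needed (and what the paper does in a slightly more elaborate form) is: assuming $p_x\otimes q_y\vdash\phi(x,y,c)$ and $q_y\otimes p_x\vdash\neg\phi(x,y,c)$, realize $(a_i:i<\omega)\models p^{(\omega)}|_{Mc}$ and then $b\models q|_{Mca_{<\omega}}$; each pair $(a_i,b)$ realizes $(q_y\otimes p_x)|_{Mc}$, so $\neg\phi(a_i,b,c)$ holds for every $i$, and generic stability ($\lim tp(a_i/\mathcal{U})=p$) forces $\neg\phi(x,b,c)\in p$, contradicting $p_x\otimes q_y\vdash\phi(x,y,c)$ since $b\models q|_{Mc}$. (The paper instead continues the Morley sequence past $b$, so that $\phi(a_i,b,c)$ fails for $i<\omega$ and holds for $\omega\le i<\omega 2$, and then uses total indiscernibility and the formulas $\theta_{n,\phi}$ to contradict convergence.) Note also that your claim that ``by total indiscernibility the truth value of $\phi(a_i,b)$ is eventually constant'' is false as stated: in the random graph a Morley sequence of the generic type is totally indiscernible, yet a vertex $b$ may be adjacent to exactly the even-indexed $a_i$; eventual constancy comes from convergence, i.e.\ from generic stability itself, not from total indiscernibility.

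Part (ii) contains a genuine gap. You propose to prove $p^{(\omega)}|_A=q^{(\omega)}|_A$ ``by the same induction as in Claim~0 of Theorem~\ref{Morley sequence}'', but that induction is not available here. In Claim~0 the base case is agreement of the two types over $M\cup(d_i)$, a set containing a Morley sequence of $p$, and every induction step reuses it: one passes from $p\vdash\phi(x_{n+1},\bar d)$ to $q\vdash\phi(x_{n+1},\bar d)$ precisely because the tuple $\bar d$ lies inside the set over which $p$ and $q$ are known to agree. In (ii) you know only $p|_A=q|_A$, and $\bar d\notin A$; indeed the implication ``$p^{(n)}|_A=q^{(n)}|_A$ implies $p^{(n+1)}|_A=q^{(n+1)}|_A$'' is simply false for general $A$-invariant types: in DLO the two $M$-invariant completions of a cut (the limit from the left and the limit from the right) agree over $M$, while their Morley squares already disagree on $x_2<x_1$ versus $x_1<x_2$. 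Generic stability must therefore enter the induction step itself, and the paper does this by routing through part (i): using $A$-invariance of $q$ and of $p^{(n)}$ together with commutation and associativity of Morley products, $q^{(n+1)}|_A=(q\otimes q^{(n)})|_A=(q\otimes p^{(n)})|_A=(p^{(n)}\otimes q)|_A=(p^{(n)}\otimes p)|_A=p^{(n+1)}|_A$. Once equality of the Morley types is in hand, your appeal to Remark~\ref{A=any}(ii) coincides with the paper's alternative conclusion and is fine; what is missing is the middle of the argument, and it cannot be supplied without (i).
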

\begin{proof}
	(i) follows from the argument of Proposition~2.33 of \cite{Simon} by   replacing  \cite[Lemma~2.28]{Simon} with the argument of Lemma~\ref{generical stable} above. Indeed, suppose for a contradiction that for some formula $\phi(x,y,c)\in L({\cal U})$  (where $c$ is a tuple of elements) we have $p_x\otimes q_y\vdash\phi(x,y,c)$ and $q_y\otimes  p_x\vdash\neg\phi(x,y,c)$. Let $(a_i:i<\omega)\models p^{(\omega)}$, $b\models q|_{{\cal U}a_{<\omega}}$ and $(a_i:\omega\leq i<\omega2)\models p^{(\omega)}|_{{\cal U}a_{<\omega}b}$. Then for $i<\omega$, $\neg\phi(a_i,b,c)$ holds and for $i\geq \omega$, we have $\phi(a_i,b,c)$. (Recall the definition of Morley products  in 2.2.1 of \cite{Simon}.) As $(a_i:i<\omega2)$ is totally indiscernible, similar to the argument of Lemma~\ref{generical stable}, it is easy to verify that for each $n$, $\theta_{n,\phi}(x_1,\ldots,x_n)\in tp((a_i)/\emptyset)$ where
	 $$\theta_{n,\phi}(x_1,\ldots,x_n)=\forall F\subseteq \{1,\ldots,n\}\exists y_F\exists y_c\Big(\bigwedge_{i\in F}\phi(x_i,y_F,y_c)\wedge \bigwedge_{i\notin F}\neg\phi(x_i,y_F,y_c)\Big).$$
	 Equivalently, the sequence $(\phi(a_i, y_F,y_c):i<\omega)$ is not convergent, a contradiction.

		(ii): Let $q$ be any $A$-invariant extension of $p|_A$. 
		
		\vspace{4pt}
		\underline{Claim}: $p^{(\omega)}|_A=q^{(\omega)}|_A$.

		\emph{Proof}: The proof is by induction, and similar to the argument of Proposition~2.35 of \cite{Simon}. 
		 The base case is $p|_A=q|_A$. The induction hypothesis is that  $p^{(n)}|_A=q^{(n)}|_A$. Using (i) above and associativity of Morley products, we have:
		 	\begin{align*}
		q^{(n+1)}_{x_1,\ldots,x_{n+1}}|_A &  = (q_{x_{n+1}}\otimes q^{(n)}_{x_1,\ldots,x_{n}})|_A   \\
		& =  (q_{x_{n+1}}\otimes  p^{(n)}_{x_1,\ldots,x_{n}})|_A  \\
		& \stackrel{(*)}{=} (p^{(n)}_{x_1,\ldots,x_{n}}\otimes q_{x_{n+1}})|_A \\  &  = (p^{(n)}_{x_1,\ldots,x_{n}}\otimes  p_{x_{n+1}})|_A
		\\ &  =   	p^{(n+1)}_{x_1,\ldots,x_{n+1}}|_A.
		\end{align*}
Notice that (i) and associativity of Morley products are used in $(*)$.\footnote{Notice that we can not use Lemma~2.34 of \cite{Simon}, because it is not known whether the products of generically stable types are generically stable or not. Although, the associativity of Morley products and the part (i) of Proposition~\ref{unique extension} are sufficient here.} 	\hfill$\dashv_{\text{claim}}$	

 Therefore, every Morley sequence of $q$ is totally indiscernible AND convergent. By Lemma~\ref{generical stable}, $q$ is generically stable and so $\lim I=q$ for any Morley sequence of $q$. This means that $p=\lim I=q$ for any  $I=p^{(\omega)}|_A=q^{(\omega)}|_A$. (Alternatively, as $p^{(\omega)}|_A=q^{(\omega)}|_A$, one can use Remark~\ref{A=any}(ii) above.)
\end{proof}

Here we want to give a local version of a classical result \cite[Proposition~3.2]{HP}:

\begin{Theorem}
	Let $T$ be a (countable or uncountable) theory,  $M$ be a model of  $T$, and  $p(x)$ a global $M$-invariant type. Suppose that there is an elementary extension $M'\succ M$ containing a Morley sequence of $p$ such that $M'$ is eventually $NIP$. Then the following are equivalent.
	
	(i) $p=\lim tp(a_i/{\cal U})$ for any $(a_i)\models p^{(\omega)}|_M$.
	
	(ii) $p$ is definable over and finitely satisfiable in $M$.
	
	(iii)  $p_x\otimes p_y=p_y\otimes p_x$.
	
	(iv) any Morley sequence of $p$ is totally indiscernible.
\end{Theorem}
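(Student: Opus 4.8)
The plan is to prove the equivalences by exploiting the machinery already developed, especially Lemma~\ref{generical stable}, Remark~\ref{one sequence}, and the characterization of eventual $NIP$ via convergent Morley sequences in Theorem~\ref{BFT-like}. The key observation is that the hypothesis---$M'\succ M$ is eventually $NIP$ and contains a Morley sequence of $p$---forces any Morley sequence of $p$ (which lives in $M'$, being realized by a type finitely satisfiable in, or at least invariant over, $M$) to be convergent, thereby making the convergence conditions in Lemma~\ref{generical stable} automatic. So the real content is threading total indiscernibility, definability, and finite satisfiability through the already-proved results, with convergence supplied for free by eventual $NIP$.

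First I would establish that under the standing hypothesis, \emph{every} Morley sequence of $p$ over $M$ converges. By Theorem~\ref{BFT-like} (applied inside $M'$, using that $M'$ is eventually $NIP$), any Morley sequence whose entries lie in $M'$ is convergent; since $M'$ contains a Morley sequence of $p$ and, by Remark~\ref{A=any}(i), convergence of one Morley sequence of $p$ implies convergence of \textbf{any} Morley sequence of $p$, I get convergence unconditionally. With this in hand, (i)~$\iff$~(iv) becomes an application of Lemma~\ref{generical stable}: if any Morley sequence is totally indiscernible, then together with convergence it yields that $p$ is generically stable, which gives $\lim tp(a_i/{\cal U})=p$; conversely, if $\lim I=p$ for every Morley sequence, I would invoke Remark~\ref{one sequence} (after establishing definability) or directly argue total indiscernibility from the symmetry of the limit---this is where I expect to lean on the ``totally indiscernible $\iff$ definable and finitely satisfiable'' circle.

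Next I would close the loop (ii)~$\Rightarrow$~(iv)~$\Rightarrow$~(i)~$\Rightarrow$~(ii). The implication (ii)~$\Rightarrow$~(iv) is the standard fact (cited repeatedly in the excerpt, e.g.\ \cite[Corollary~4.11]{K-Dependent}) that definable and finitely satisfiable types have totally indiscernible Morley sequences. For (i)~$\Rightarrow$~(ii), I would argue that $\lim tp(a_i/{\cal U})=p$ together with indiscernibility of the Morley sequence gives $DBSC$-convergence of $(tp(a_i/{\cal U}))$, exactly as in the proof of Theorem~\ref{Thm B}, direction (vii)~$\Rightarrow$~(iv); but to extract definability I need the $NIP$-flavored input, and here the eventual $NIP$ of $M'$ plays the role that $NSOP$ played in Theorem~\ref{Thm B}---via the Eberlein--Grothendieck criterion (\cite[Fact~2.2]{K-Baire}) the limit function is continuous, so $p$ is definable over $M$. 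Finite satisfiability in $M$ should follow from $p$ being finitely satisfiable in $M'$ combined with $M$-invariance and the structure of the Morley sequence, or from the generic stability once it is in place. Finally (iii)~$\iff$~(iv) is the classical equivalence between commutativity of the Morley square $p_x\otimes p_y = p_y\otimes p_x$ and total indiscernibility of the Morley sequence, which I would cite and adapt from the argument of Proposition~\ref{unique extension}(i).

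The main obstacle, I expect, is not any single implication but correctly isolating \emph{which} hypothesis delivers definability. In Theorem~\ref{Thm B} that role was played by $NSOP$; here it must be played by the eventual $NIP$ of $M'$, and I need to verify that eventual $NIP$ of a model containing the Morley sequence genuinely forbids the order pattern ``$\phi(a_i,b_j)$ iff $i<j$'' that the Eberlein--Grothendieck criterion rules out. Concretely, the delicate point is that eventual $NIP$ is a statement about sequences \emph{inside} $M'$, whereas definability of $p$ is about the limit behavior against \emph{arbitrary} parameters $b$ from the monster; bridging this requires that the relevant instability witness, if it existed, could be pulled back into an $M'$-sequence contradicting eventual $NIP$. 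I would handle this by the same diagonal-plus-symmetric-formula argument used in the proof of Theorem~\ref{BFT-like}, direction (iii)~$\Rightarrow$~(i), transferring the forbidden $\psi_\phi$-pattern to the convergent Morley sequence $(d_i)$ and deriving a contradiction with its convergence. Once definability is secured, every remaining implication reduces to Lemma~\ref{generical stable} and the cited facts.
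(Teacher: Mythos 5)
Your overall decomposition---derive convergence of Morley sequences of $p$ from the eventual $NIP$ of $M'$, then get (iv)~$\Rightarrow$~(i) from Lemma~\ref{generical stable}---agrees with the paper on the one implication that actually needs the hypothesis. But there are two genuine gaps. First, your convergence step rests on an inapplicable citation: Theorem~\ref{BFT-like} assumes $T$ and the model are countable and concerns types finitely satisfiable in that model, none of which holds here ($T$ may be uncountable, $M'$ may be uncountable, and $p$ is only assumed $M$-invariant). The correct argument---which is the paper's entire proof of (iv)~$\Rightarrow$~(i)---is direct and needs no such theorem: a Morley sequence $J\subseteq M'$ is indiscernible, so the pattern-forbidding formula $\psi_\phi$ that eventual $NIP$ supplies for \emph{some} subsequence of $J$ must hold of \emph{every} increasing tuple from $J$ itself; hence $J$ converges for each $\phi$, and by Remark~\ref{A=any}(i) every Morley sequence of $p$ converges.

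Second, and more seriously, your route for (i)~$\Rightarrow$~(ii) would fail. You want definability to come from $DBSC$-convergence via the Eberlein--Grothendieck criterion, with eventual $NIP$ of $M'$ ``playing the role that $NSOP$ played in Theorem~\ref{Thm B}.'' It cannot play that role: what Eberlein--Grothendieck needs excluded is the order pattern $\phi(a_i,b_j)\iff i<j$, an $SOP$-type configuration, whereas eventual $NIP$ only forbids alternation ($IP$-type) patterns; and convergence of the Morley sequence does not contradict an order pattern either. Concretely, an increasing indiscernible sequence $(a_i)$ in a dense linear order is convergent (indeed $DBSC$-convergent) and lives in an eventually $NIP$ model, yet realizes exactly this order pattern (take $b_j=a_j$); so ``transferring the forbidden $\psi_\phi$-pattern and deriving a contradiction with convergence'' has nothing to contradict. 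Your fallback for finite satisfiability also assumes $p$ is finitely satisfiable in $M'$, which is not part of the hypotheses. The gap dissolves once you notice that condition (i) is \emph{literally} the definition of generic stability over $M$ recalled at the start of Section~4; hence (i)~$\Rightarrow$~(ii) is the standard fact, Fact~\ref{Gannon fact}(i), valid in any theory with no $NIP$/$NSOP$ input. That is exactly how the paper proceeds: (i)~$\Rightarrow$~(ii)~$\Rightarrow$~(iii)~$\Rightarrow$~(iv) are quoted as standard implications holding in any theory, and only (iv)~$\Rightarrow$~(i) uses the eventual $NIP$ hypothesis, via the one-line argument above together with Lemma~\ref{generical stable}.
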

\begin{proof}
	(i)~$\Longrightarrow$~(ii)~$\Longrightarrow$~(iii)~$\Longrightarrow$~(iv)  are standard and
 hold in any theory. (Cf. Theorem~2.29 of \cite{Simon}.)

	(iv)~$\Longrightarrow$~(i): Let $J\in M'$ be a Morley sequence of $p$. Since   $M'$ is eventually $NIP$, the sequence $J$ is convergent.  By Lemma~\ref{generical stable}, $p$ is generically stable.
\end{proof}
Notice that the above theorem holds with a weaker assumption, namely every formula has $NIP$ in $M'$. (Cf. \cite{KP}, for definition of $NIP$ in a model.)  This easily follows from indiscernibility of Morley sequences.

\subsection*{Eventually stable models}
The story started from Grothendieck's
double limit characterization of weak relative compactness, Theorem~6 in \cite{Grothendieck}. In \cite{Ben-Groth} Ben Yaacov showed that the ``Fundamental Theorem of Stability" is in fact  a consequence of  Grothendieck's theorem.
Shortly afterwards, Pillay \cite{Pillay} pointed out that  the model-theoretic meaning of the Grothendieck theorem is that the formula $\phi(x,y)$ does not have the order property in $M$ if and only if every complete $\phi$-type $p(x)\in S_\phi(M)$ has an extension to a complete type $p'\in S_\phi({\cal U})$
  which is finitely satisfiable in, and definable over $M$. There, he  called such types `generically stable' and  said: ``We will investigate later to what extent we can deduce the stronger notions of generic stability from not the order property in $M$". Here, using the previous results/observations, we can prove a result similar to  \cite{Pillay} for the stronger notions of generic stability. Maybe  the following result is the end of this story, and of course the beginning of another story.

  \begin{Definition} \label{generic model}
  {\em	Let   $M$ be a model. (i)  We say that $M$   {\em has no order}  if for any formula $\phi(x,y)$ there do not exist $(a_i),(b_i)$ in $M$ for $i<\omega$ such that $M\models\phi(a_i,b_j)$ iff $i\leq j$.  
  \newline	(ii) We say that $M$ is {\em eventually stable} if
  
   (1) $M$  has no order, and
   
    (2) $M$   is eventually  $NIP$ (as in Definition~\ref{eventual NIP}). }
  \end{Definition}

  \begin{Remark}
  	(i) In stable theories, every model is eventually  stable.
  	\newline
  	 (ii)  In $NIP$ theories, every  model which has no order is eventually  stable.
  \end{Remark}
\begin{proof}
	(i): If not, similar to the argument of (i)~$\Rightarrow$~(iii) of \cite[Proposition~2.14]{K-Baire},  we can find a formula $\phi(x,y)$, an indiscernible sequence $(c_i)$, and an element $d$ such that   $\phi(c_i,d)$ holds if and only if $i$ is even. This contradicts $NIP$. 
	
	(ii) Suppose that the theory $T$ is $NIP$ and $M\models T$ has no order. 	Suppose for a contradiction  that $M$ is {\bf not} eventually  $NIP$. 	Similar to (i), we can find a formula $\phi(x,y)$, an indiscernible sequence $(c_i)$  {\em (possibly in
	an elementary extension of $M$)}, and an element $d\in \cal U$  such that   $\phi(c_i,d)$ holds if and only if $i$ is even, a contradiction.
\end{proof}

\begin{Theorem} \label{Pillay-Grothendieck} Let $T$ be a (countable or uncountable) theory, and $M$ be a model of $T$. The following are equivalent:
	\newline
	(i) $M$ is eventually  stable.
		\newline
	(ii) Any  type $p\in S_x(M)$ has an extension to a global type $p'\in S_x({\cal U})$ which  is generically stable over $M$.
\end{Theorem}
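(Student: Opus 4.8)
The plan is to read this statement as the gluing of two one-variable phenomena the paper has already isolated: local stability (no order), which governs \emph{definability} and \emph{finite satisfiability} of limit types, and eventual $NIP$, which governs \emph{convergence} of Morley sequences. The bridge I will use throughout is the equivalence, for a global $M$-invariant type $q$, between ``$q$ is generically stable over $M$'' and ``$q$ is definable over and finitely satisfiable in $M$, and its Morley sequences converge'': the forward direction is Fact~\ref{Gannon fact}(i) together with the recalled definition of generic stability, and the backward direction is Lemma~\ref{generical stable}, using that definability plus finite satisfiability forces every Morley sequence to be totally indiscernible.

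For (i)~$\Rightarrow$~(ii), given $p\in S_x(M)$ I would first extend it to a global coheir $q$, i.e. a global type finitely satisfiable in $M$. Since $M$ has no order, Grothendieck's double-limit theorem in the form of the Eberlein--Grothendieck criterion (\cite[Fact~2.2]{K-Baire}) makes each limit function $f_q^\phi$ continuous, so $q$ is definable over $M$; being also finitely satisfiable in $M$, its Morley sequences over $M$ are totally indiscernible. Because $M$ is eventually $NIP$, Theorem~\ref{BFT-like} (direction (i)~$\Rightarrow$~(iii)) produces a convergent Morley sequence of $q$, and Remark~\ref{A=any}(i) upgrades this to convergence of \emph{every} Morley sequence of $q$. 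Lemma~\ref{generical stable} then gives that $q$ is generically stable over $M$, and $q$ extends $p$, as required. Note this direction genuinely uses both hypotheses: no order supplies definability, eventual $NIP$ supplies convergence (which definability alone does not).

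For (ii)~$\Rightarrow$~(i), each type over $M$ has a generically stable extension, which by Fact~\ref{Gannon fact}(i) is definable over and finitely satisfiable in $M$; applying this to every $\phi$-type and invoking Pillay's characterization \cite{Pillay} yields that $M$ has no order. To get eventual $NIP$ I will verify clause (iii) of Theorem~\ref{BFT-like}, namely that \emph{every} global type $r$ finitely satisfiable in $M$ has a convergent Morley sequence. The decisive point is that $r$ is $M$-invariant and restricts to $p:=r|_M$; by (ii) this $p$ has a generically stable extension $p'$, and by the uniqueness of the $M$-invariant extension of a generically stable type (Proposition~\ref{unique extension}(ii)) together with $p'|_M = p = r|_M$ we must have $r=p'$. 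Hence $r$ is itself generically stable, so its Morley sequences converge, and Theorem~\ref{BFT-like} (direction (iii)~$\Rightarrow$~(i)) gives that $M$ is eventually $NIP$; combined with ``no order'' this is exactly eventual stability.

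Two reductions and the main obstacle remain. Theorem~\ref{BFT-like} is stated for countable $T$ and countable $M$, so both directions require passing to a countable fragment $L_0\ni\phi$ and, by Fact~\ref{Gannon fact}(iii), to a countable $M_0\prec M$ over which the relevant type stays definable and finitely satisfiable; here one uses that both ``no order'' and ``eventually $NIP$'' are inherited by $L_0$-reducts and by elementary submodels, which is legitimate precisely because convergence of a Morley sequence is a formula-by-formula condition. I expect the real obstacle to be the (ii)~$\Rightarrow$~(i) direction rather than these reductions: hypothesis (ii) only guarantees convergent Morley sequences for the \emph{particular} extensions it produces, whereas Theorem~\ref{BFT-like} needs convergence for \emph{all} finitely satisfiable types. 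Proposition~\ref{unique extension}(ii) is exactly what collapses an arbitrary finitely satisfiable $r$ onto the generically stable extension of its restriction, and so is the step I would isolate and check most carefully.
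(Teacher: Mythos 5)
Your proposal is correct, and its skeleton is the same as the paper's: Pillay's result (\cite[Proposition~2.3(c)]{Pillay}, equivalently the Grothendieck/Eberlein--Grothendieck argument you invoke) accounts for the ``no order'' half, and Theorem~\ref{BFT-like} together with Theorem~\ref{Thm B} (in your write-up, Lemma~\ref{generical stable} plus total indiscernibility of Morley sequences of definable, finitely satisfiable types --- the same thing, since that is how Theorem~\ref{Thm B}(iii)$\Rightarrow$(i) is proved) accounts for the ``eventually $NIP$'' half. The worthwhile difference is in (ii)~$\Rightarrow$~(i). The paper's proof is essentially one line: it asserts that, by Theorems~\ref{BFT-like} and \ref{Thm B}, ``any global type finitely satisfiable in and definable over $M$ is generically stable over $M$ iff $M$ is eventually $NIP$''. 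But clause (iii) of Theorem~\ref{BFT-like} quantifies over \emph{all} types finitely satisfiable in $M$, not only the definable ones, and hypothesis (ii) only produces \emph{some} generically stable extension of each type over $M$; bridging this needs exactly what you supply, namely Proposition~\ref{unique extension}(ii) (stationarity), which identifies an arbitrary coheir $r$ with the generically stable extension of $r|_M$, so that every type finitely satisfiable in $M$ is itself generically stable and hence has convergent Morley sequences. (The alternative is to re-run the Grothendieck argument to show that under ``no order'' every coheir is already definable, and then appeal to uniqueness of definable extensions over a model; your route is cleaner.) You are also more explicit than the paper about the cardinality reductions: the paper only reduces $T$ to countable fragments, whereas Theorem~\ref{BFT-like} also requires $M$ countable, so the passage to a countable $M_0\prec M$ via Fact~\ref{Gannon fact}(iii), together with the observations that ``no order'' and eventual $NIP$ pass to reducts and to elementary submodels and that convergence of a Morley sequence is a formula-by-formula condition, is genuinely needed. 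In short: same decomposition as the paper, but your version makes precise the two steps that the paper's proof compresses.
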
	
\begin{proof} First, without loss of generality we can assume that $T$ is countable.\footnote{We consider {\em all} countable fragments of the languages.} By Proposition~2.3(c) of \cite{Pillay},  $M$ has no order if and only if any  type $p\in S_x(M)$ has an extension to a global type $p'\in S_x({\cal U})$ which  is finitely satisfiable in, and definable over $M$.\footnote{In fact, we do not need to use Grothendieck's argument. Indeed, assuming eventual $NIP$, as any Morley sequence is controlled by a sequence in the model and vice versa (cf. Theorem~\ref{BFT-like}), we can use the standard fact that a Morley sequence  is totally indiscernible iff it has no order (cf. Theorem~12.37 of \cite{Poizat}).}  
   By Theorems~\ref{BFT-like} and \ref{Thm B} above, any global type $p'\in S_x({\cal U})$ which is finitely satisfiable in, and definable over $M$   is generically stable over $M$  if and only if  $M$  is eventually $NIP$.  This proves the theorem.
\end{proof}

\subsection*{Concluding remarks/questions}
(1) In Example~2.18 of \cite{K-Baire}, we built a  graph $N$ with the following property: (i) there is a sequence $(a_i)\in N$ such that $R(a_i,y)$ converges, and (ii) $R(a_i,y)$ is  not $DBSC$-convergent.  We guess that a {\em modification} of this example leads to a definable type $p$ such that: (i) there is a sequence $(a_i)$ with $\lim tp(a_i/{\cal U})=p$, and (ii) $p$ is not the limit of any $DBSC$-convergent sequence. (For this, one need to remove the axiom schema (1) in  Example~2.18, and to check the above properties.)
Therefore,  by Theorem~\ref{Thm B}, $p$ is not generically stable. 
This approach probably  answer to  Question~4.15 of \cite{Gannon sequential}.


\medskip\noindent
(2) These results/observations can be generalize to ``continuous logic" \cite{BBHU}. On the other hand, one can generalize Theorem~\ref{BFT-like} for {\em measures} in classical logic. This is a generalization of another result of Gannon \cite[Theorem~5.10]{Gannon sequential}. Recall that measures in classical logic correspond to types in continuous logic. This means that a generalization of Theorem~\ref{BFT-like} to continuous logic leads to a generalization of this theorem for measures in classical logic, and vice versa.

\medskip\noindent
(3) In \cite{K-Banach}, we claimed that in the language of Banach spaces in continuous logic, there is a  Krivine-Maurey type theorem for $NIP$ theories (or even $NIP$ spaces). That is, for any separable $NIP$ space $X$ there exists a spreading model of $X$ containing $c_0$ or $\ell_p$
for some $1\leq p<\infty$. We believe that the results/observations of the present paper are sufficient tools and they lead  to a proof of this conjuncture. For example, notice that $EEM$-types correspond to spreading models in Banach space theory. On the other hand, types of $c_0$ and  $\ell_p$ are symmetric in a strong sense. Finally, the types of $c_0$  or  $\ell_p$ are finitely satisfied in any Banach space, by Krivine's theorem.

\medskip\noindent
(4) In \cite{K-Banach}, we showed that every $\aleph_0$-categotical Banach space contains  $c_0$ or $\ell_p$. What is the translation of this observation into ``classical logic" (if such a translation is essentially possible)? Similar questions can be asked about the Krivine-Maurey theorem (and the claim in (3) whenever a proof of it is given).

\medskip\noindent
We will study them elsewhere. (See for example  \cite{K-generic}, for (2).)

\bigskip\noindent
{\bf Acknowledgements.} I want to thank Predrag Tanović for reading a  version of this article and for his helpful comments (especially because of the argument of Remark~\ref{A=any}(iv)).
 
I would like to thank the Institute for Basic Sciences (IPM), Tehran, Iran. Research partially supported by IPM grant~1400030118.

\end{document}